\documentclass[12pt,a4paper]{amsart}
\usepackage[top=1.13in, bottom=1.13in, left=1.15in, right=1.15in]{geometry}
\usepackage[T1]{fontenc}
\usepackage{latexsym}
\usepackage{amsmath}
\usepackage{shuffle}
\usepackage{amssymb, amscd, amsthm, mathrsfs}
\usepackage[all]{xy}
\usepackage[dvips]{graphicx}

\numberwithin{equation}{section}

\newtheorem{definition}{Definition}
\newtheorem{proposition}{Proposition}
\newtheorem{theorem}{Theorem}
\newtheorem{lemma}{Lemma}
\newtheorem{corollary}{Corollary}
\newtheorem{remark}{Remark}
\newtheorem{example}{Example}


\newcommand{\ra}{\rightarrow}

\newcommand{\vp}{\varphi}

\newcommand{\ts}{\otimes}
\newcommand{\s}{\sigma}

\newcommand{\mc}{\mathbb{K}}

\newcommand{\wt}{\widetilde}

\newcommand{\one}{\mathbf{1}}

\newcommand{\GVB}{\operatorname*{GVB}}
\newcommand{\VB}{\operatorname*{VB}}

\newcommand{\id}{\operatorname*{id}}
\newcommand{\sspan}{\operatorname*{span}}
\newcommand{\im}{\operatorname*{im}}
\newcommand{\des}{\operatorname*{des}}
\newcommand{\mm}{\operatorname*{\bold{M}}}

\newcommand{\End}{\operatorname*{End}}
\newcommand{\Hom}{\operatorname*{Hom}}
\newcommand{\QS}{\operatorname*{QS}}

\numberwithin{equation}{section}

\begin{document}

\title[Generalized Matsumoto-Tits sections]{Generalized Matsumoto-Tits sections and quantum quasi-shuffle algebras}

\author{Xin FANG}
\address{Mathematisches Institut, Universit\"{a}t zu K\"{o}ln, Weyertal 86-90, D-50931, K\"{o}ln, Germany.}
\email{xfang@math.uni-koeln.de}
\maketitle 

\begin{abstract}
In this paper generalized Matsumoto-Tits sections lifting permutations to the algebra associated to a generalized virtual braid monoid are defined. They are then applied to study the defining relations of the quantum quasi-shuffle algebras via the total symmetrization operator.
\end{abstract}

\section{Introduction}

\subsection{Motivations}

Quantum quasi-shuffle products (\cite{JRZ11}), as generalizations of the quasi-shuffle products (\cite{Hoff}, \cite{NR79}, \cite{Car72}), quantum shuffle products (\cite{Ros98}, \cite{Green}) and shuffle products (\cite{EM53}), unify several important but apparently different algebraic structures such as:
\begin{itemize}
\item Product of iterate integrals (\cite{Ree}, \cite{Chen});
\item Free Rota-Baxter algebras (\cite{Car72}, \cite{GK00});
\item Positive parts of quantum groups (\cite{Ros95}, \cite{Ros98}, \cite{Green}); the entire quantum group (\cite{FR12});
\item Multiple zeta values (\cite{Hoff}) and their $q$-deformed versions (\cite{HI});
\item Quasi-symmetric functions and their $q$-deformed versions (\cite{TU96});
\item Free dendriform and tridendriform algebras (\cite{Lod});
\item Hopf algebra of rooted trees (\cite{CK98});
\item (Conjecturally) Fomin-Kirillov algebras (\cite{FK99}, \cite{MS99}).
\end{itemize}
These products are originally defined using either universal properties \cite{JR12} or by inductive formulae \cite{JRZ11}. Its combinatorial nature is first studied by Hoffman \cite{Hoff}, Guo-Keigher \cite{GK00} and Ebrahimi-Fard-Guo \cite{EG06} in the classical case (quasi-shuffle products) and henceforth generalized by Jian \cite{J13} to the quantum case using the notion of mixable shuffles \cite{EG06}.
\par
Another different combinatorial description of the quantum quasi-shuffle products is discovered in \cite{F13} via lifting shuffles to the generalized virtual braid monoids, revealing symmetries behind these combinatorial structures. More precisely, for positive integers $p+q=n$, let $\mathfrak{S}_{p,q}\subset \mathfrak{S}_n$ be the set of $(p,q)$-shuffles and $\GVB_n^+$ be the monoid associated to the generalized virtual braid group $\GVB_n$ (see Section \ref{Sec:2} for the definition). A set-theoretical map $Q_{p,q}$ from $\mathfrak{S}_{p,q}$ to $\mc[\GVB_n^+]$ is constructed such that the quantum quasi-shuffle product of two pure tensors of degrees $p$ and $q$ respectively is encoded by the action of $\sum_{s\in\mathfrak{S}_{p,q}}Q_{p,q}(s)$. The braid group $\mathfrak{B}_n$ is a quotient of $\GVB_n$, the composition of this projection and $Q_{p,q}$ gives a map $\mathfrak{S}_{p,q}\ra\mc[\mathfrak{B}_n^+]$, which coincides with the classical Matsumoto-Tits section. It is for this reason that the map $Q_{p,q}$ is called a generalized Matsumoto-Tits section.
\par
It is natural to ask for non-trivial extensions of $Q_{p,q}$ to the whole $\mathfrak{S}_{n}$, generalizing the classical Matsumoto-Tits section. The goal of this paper is to solve this question and to study its applications to the total symmetrization operator and to the defining ideal of a quantum quasi-shuffle algebra.

\subsection{Main ideas}

The idea of the construction of the generalized Matsumoto-Tits section for the entire $\mathfrak{S}_n$ is noticing that $\mathfrak{S}_{p,q}\backslash\{e\}$ coincides with elements in $\mathfrak{S}_n$ having only one descent at the position $p$. Once a permutation in $\mathfrak{S}_n$ is given, it can be decomposed according to its descents into shuffles and we can iterate the lifting of shuffles.
\par
In the situation of two descents, there are two different ways to decompose a permutation into shuffles, the associativity of the quantum quasi-shuffle product encodes the difference of these two liftings. By contrast with the classical Matsumoto-Tits section, there is no canonical way to lift permutations to generalized virtual braid groups.
\par
This construction allows us to define the total symmetrization operator which can be used to express the quantum quasi-shuffle algebra as its image, giving the definition of the defining ideal as its kernel. Finally we discuss how to lift the defining relations in the corresponding quantum shuffle algebra to the quantum quasi-shuffle algebra, where an algorithm is proposed.

\subsection{Outline of this paper}
In Section 2 motivations arising from the (quantum) shuffle algebras and the constructions in \cite{F13} about lifting shuffles are recalled. We tackle with the problem of lifting permutations of two descents in Section 3 and then apply it to study the combinatorial associativity of the quantum quasi-shuffle product. The problem of lifting a permutation of an arbitrary descent is solved in Section 4. As applications, we study the total symmetrization map, which allows us to define the defining ideal associated to a quantum quasi-shuffle algebra. The last section is devoted to the study of the degeneration of a quantum quasi-shuffle algebra to a quantum shuffle algebra, which is then applied to provide an algorithm on the lifting of the defining relations.

\section{Backgrounds, recollections and basic settings}\label{Sec:2}

\subsection{Definitions and notations}

For an integer $n\geq 2$, let $\mathfrak{S}_n$ denote the symmetric group acting on $n$ positions numerated by $1,\cdots,n$ via permuting their positions. For two integers $p,q>0$ satisfying $p+q=n$, a permutation $\s\in\mathfrak{S}_n$ is called a $(p,q)$-shuffle if $\s^{-1}(1)<\cdots<\s^{-1}(p)$ and $\s^{-1}(p+1)<\cdots<\s^{-1}(p+q)$, the set of $(p,q)$-shuffles in $\mathfrak{S}_n$ will be denoted by $\mathfrak{S}_{p,q}$.

\par
Let $\GVB_n$, $\VB_n$ and $\mathfrak{B}_n$ denote the generalized virtual braid group, the virtual braid group and the braid group respectively; $\mc[\GVB_n]$, $\mc[\VB_n]$ and $\mc[\mathfrak{B}_n]$ denote the corresponding group algebras. The group $\GVB_n$ (\cite{F13}) is generated by $\{\s_i,\xi_i|\ i=1,\cdots,n-1\}$ subject to the following relations
\begin{enumerate}
\item[(1).] for $|i-j|>1$, $
\s_i\s_j=\s_j\s_i$, $\s_i\xi_j=\xi_j\s_i$, $\xi_i\xi_j=\xi_j\xi_i$;
\item[(2).] for $1\leq i\leq n-2$, $
\s_i\s_{i+1}\s_i=\s_{i+1}\s_i\s_{i+1}$, $\xi_i\xi_{i+1}\xi_i=\xi_{i+1}\xi_i\xi_{i+1}$,
$\xi_i\s_{i+1}\s_i=\s_{i+1}\s_i\xi_{i+1}$, $\xi_{i+1}\s_i\s_{i+1}=\s_{i}\s_{i+1}\xi_i$.
\end{enumerate}
The group $\VB_n$ is the quotient of $\GVB_n$ by the normal sub-group generated by $\s_i^2$ for $i=1,\cdots,n-1$. Let $\{s_i|\ i=1,\cdots,n-1\}$ and $\{\s_i|\ i=1,\cdots,n-1\}$ be the generating sets of $\mathfrak{S}_n$ and $\mathfrak{B}_n$, respectively. The monoids associated to these groups are denoted by $\mathfrak{B}_n^+$, $\VB_n^+$ and $\GVB_n^+$; we let $\mc[\mathfrak{B}_n^+]$, $\mc[\VB_n^+]$ and $\mc[\GVB_n^+]$ denote the corresponding algebras.

\begin{remark}
Our notations on the virtual braid groups are different from those in Kauffman-Lambropoulou \cite{Kauf}: the roles of $\s_i$ and $\xi_i$ are swapped.
\end{remark}

Throughout this paper the following arrow notations will be applied.
\par
Fix two integers $1\leq m<n$ and $0\leq k\leq n-m$. Let $i_k$ denote the $\mc$-algebra map $\mc[\GVB_m]\ra\mc[\GVB_n]$ uniquely determined by: for $1\leq i\leq m-1$, 
$$i_k(\s_i)=\s_{i+k},\ \ i_k(\xi_i)=\xi_{i+k}.$$
The arrow notation $\uparrow$ will be used to denote the image under $i_k$: for any $\s\in\GVB_m$,
$$\s^{\uparrow k}:=i_k(\s).$$
Since $i_k$ is $\mc$-linear, the above notation will be applied when $\s$ is either in $\mc[\GVB_m]$ or in $\mc[\GVB_m^+]$.
\par
The algebras $\mc[\VB_n]$, $\mc[\mathfrak{B}_n]$ and $\mc[\mathfrak{S}_n]$ are quotients of $\mc[\GVB_n]$; it is clear that $i_k$ passes to these quotients and the arrow notation can be applied to these situations: for any $\s$ in any of these algebras, we let $\s^{\uparrow k}$ denote the image of $\s$ under $i_k$.
\par
This notation will also be used for local representations. It will be explained in the case of $\GVB_m$ and is applicable to all situations mentioned above. Let $\rho:\mc[\GVB_m]\ra\End(V^{\ts m})$ be the local representation of $\mc[\GVB_m]$ defined by 
$$\s_i\mapsto {\id}^{\ts (i-1)}\ts \s\ts{\id}^{(m-i-1)},\ \ \xi_i\mapsto {\id}^{\ts (i-1)}\ts \tau\ts{\id}^{(m-i-1)}\ \ 
\text{for some}\ \ \s,\tau\in\End(V^{\ts 2}).$$ 
For $1\leq k\leq n-m$, we let $\rho^{\uparrow k}$ denote the representation $\mc[\GVB_m]\ra\End(V^{\ts n})$ defined by:
$$\s_i\mapsto {\id}^{\ts (k+i-1)}\ts \s\ts{\id}^{(n-k-i-1)},\ \ \xi_i\mapsto {\id}^{\ts (k+i-1)}\ts \tau\ts{\id}^{(n-k-i-1)}$$
for some $\s,\tau\in\End(V^{\ts 2})$ as above.
\par
For a subset $S$ of $\GVB_m$ (resp. $\VB_m$, $\mathfrak{B}_m$, $\mathfrak{S}_m$), we will denote 
$$S^{\uparrow k}:=i_k(S)=\{\s^{\uparrow k}|\ \s\in S\}\subset{\GVB}_n\ \  (\text{resp. }{\VB}_n,\ \mathfrak{B}_n,\ \mathfrak{S}_n).$$
When $k=0$, we sometimes omit the arrow notation. For instance, for $p+q<n$, the subset $\mathfrak{S}_{p,q}^{\uparrow 0}$ will be denoted by $\mathfrak{S}_{p,q}$.

\subsection{Shuffle products}
To motivate the main construction of this paper we start with recollections on the shuffle product and its quantization from a combinatorial point of view.

Let $V$ be a vector space and $T(V)$ be the tensor vector space associated to $V$. We denote:
$$T^n(V):=V^{\ts n},\ \ T^{\leq n}(V):=\bigoplus_{k=0}^n T^{k}(V).$$ 
The shuffle product on $T(V)$ is defined as follows: for $v_1,\cdots,v_{p+q}\in V$,
$$(v_1\ts\cdots\ts v_p)\cdot (v_{p+1}\ts\cdots\ts v_{p+q})=\sum_{s\in\mathfrak{S}_{p,q}}s(v_1\ts\cdots\ts v_{p+q}).$$
The associativity of the shuffle product can be proved either in an algebraic way by applying the universal property or combinatorially by observing the following facts:
\begin{enumerate}
\item For $1\leq i<j< n$ and $v_1,\cdots,v_n\in V$,
$$((v_1\ts\cdots\ts v_i)\cdot (v_{i+1}\ts\cdots\ts v_j))\cdot (v_{j+1}\ts\cdots\ts v_n)=\sum_{t\in\mathfrak{S}_{j,n-j}}\sum_{s\in\mathfrak{S}_{i,j-i}}ts(v_1\ts\cdots\ts v_n),$$
$$(v_1\ts\cdots\ts v_i)\cdot ((v_{i+1}\ts\cdots\ts v_j)\cdot (v_{j+1}\ts\cdots\ts v_n))=\sum_{t\in\mathfrak{S}_{i,n-i}}\sum_{s\in\mathfrak{S}_{j-i,n-j}^{\uparrow i}}ts(v_1\ts\cdots\ts v_n).$$
\item Suppose that $p+q\leq n$; we denote the partial symmetrization operator $S_{p,q}=\sum_{s\in\mathfrak{S}_{p,q}}s$ in the group algebra $\mc[\mathfrak{S}_n]$. It is clear that for $1\leq i<j<n$,
\begin{equation}\label{eq1}
S_{j,n-j}S_{i,j-i}=S_{i,n-i}S_{j-i,n-j}^{\uparrow i}.
\end{equation}
This formula can be proved by examing the descents (see Lemma \ref{Lem:bij2} and Lemma \ref{Lem:decomp}).
\end{enumerate}

This combinatorial point of view allows us to look at the shuffle product purely inside of the algebra of symmetric groups via descent algebras, i.e., the shuffle product is the "representation" of this structure.
\par
The tensor space $T(V)$, endowed with the shuffle product, is called the shuffle algebra associated to $V$; it will be denoted by $\text{Sh}(V)$. The sub-algebra $S(V)$ of $\text{Sh}(V)$ generated by $V$ is isomorphic to the symmetric algebra on $V$.

\subsection{Quantum shuffle algebras}\label{Sec:2.3}
The quantum shuffle algebra (\cite{Ros95}, \cite{Ros98}) is a quantization of the shuffle algebra by considering a supplementary braid structure on the vector space.
\par
Let $(V,\s)$ be a braided vector space. The quantum shuffle product on the tensor vector space $T(V)$ is defined in the following way:
\begin{enumerate}
\item Let $T:\mathfrak{S}_n\ra\mathfrak{B}_n^+$ be the Matsumoto-Tits section: for $s\in\mathfrak{S}_n$ with a reduced expression $s=s_{i_1}\cdots s_{i_l}$, $T$ is the set-theoretic map sending $s$ to $T_s:=\s_{i_1}\cdots \s_{i_l}$, which is independent of the choice of the reduced expression. Its linear extension $\mc[\mathfrak{S}_n]\ra\mc[\mathfrak{B}_n^+]$ will also be denoted by $T$.
\item For $p,q\geq 1$ and $v_1,\cdots,v_{p+q}\in V$, the quantum shuffle product is defined by:
$$(v_1\ts\cdots\ts v_p)\bullet (v_{p+1}\ts\cdots\ts v_{p+q})=\sum_{s\in\mathfrak{S}_{p,q}}T_s(v_1\ts\cdots\ts v_{p+q}).$$
\end{enumerate}

The original construction of this product and its associativity (\textit{loc.cit.}) rely on the universal property of the cotensor coalgebra associated to a Hopf bimodule \cite{Ros98}. We explain here a short combinatorial proof of the associativity. The following lemma is well-known.

\begin{lemma}\label{Lem:length}
Let $\s,\tau\in\mathfrak{S}_n$ such that $l(\s\tau)=l(\s)+l(\tau)$. Then $T_{\s\tau}=T_\s T_\tau$.
\end{lemma}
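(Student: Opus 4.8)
The plan is to reduce the statement to the defining property of the Matsumoto-Tits section, namely that $T_s$ depends only on $s$ and not on the chosen reduced expression. The key observation is that when $l(\s\tau)=l(\s)+l(\tau)$, concatenating a reduced expression of $\s$ with one of $\tau$ produces a reduced expression of $\s\tau$. First I would fix reduced expressions $\s=s_{i_1}\cdots s_{i_k}$ and $\tau=s_{j_1}\cdots s_{j_\ell}$, so that $k=l(\s)$ and $\ell=l(\tau)$. Then the word $s_{i_1}\cdots s_{i_k}s_{j_1}\cdots s_{j_\ell}$ represents $\s\tau$ and has length $k+\ell=l(\s)+l(\tau)=l(\s\tau)$; by definition of the length function as the minimal word length, this word is therefore a reduced expression for $\s\tau$.

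Next I would apply the defining property of $T$ twice. On one hand, $T_{\s\tau}=\s_{i_1}\cdots\s_{i_k}\s_{j_1}\cdots\s_{j_\ell}$, computed from the reduced expression just constructed. On the other hand, $T_\s=\s_{i_1}\cdots\s_{i_k}$ and $T_\tau=\s_{j_1}\cdots\s_{j_\ell}$, computed from the originally chosen reduced expressions of $\s$ and $\tau$. Multiplying these two in $\mathfrak{B}_n^+$ gives $T_\s T_\tau=\s_{i_1}\cdots\s_{i_k}\s_{j_1}\cdots\s_{j_\ell}$, which is exactly the expression obtained for $T_{\s\tau}$. Hence $T_{\s\tau}=T_\s T_\tau$.

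There is essentially no obstacle here once the well-definedness of the Matsumoto-Tits section is granted, since that is precisely the content of Matsumoto's theorem (equivalently, it follows from the fact that any two reduced words for the same permutation are related by braid moves, and the braid relations hold in $\mathfrak{B}_n^+$ by construction). The only point deserving a word of care is the claim that concatenation of reduced expressions is again reduced when lengths add; I would note that this is immediate from $l(\s\tau)\le l(\s)+l(\tau)$ together with the hypothesis $l(\s\tau)=l(\s)+l(\tau)$, so that no word for $\s\tau$ is shorter than the concatenated one, making the latter reduced by definition.
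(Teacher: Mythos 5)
Your proof is correct: concatenation of reduced words is reduced exactly when lengths add, and then the independence of $T_s$ from the choice of reduced expression (which the paper takes as the defining property of the Matsumoto--Tits section) immediately yields $T_{\s\tau}=T_\s T_\tau$. The paper omits a proof entirely, citing the lemma as well-known, and your argument is precisely the standard one that is being invoked.
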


For the associativity, like in the shuffle product case, it suffices to show that
$$\sum_{t\in\mathfrak{S}_{j,n-j}}\sum_{s\in\mathfrak{S}_{i,j-i}}T_tT_s=\sum_{t\in\mathfrak{S}_{i,n-i}}\sum_{s\in\mathfrak{S}_{j-i,n-j}^{\uparrow i}}T_tT_s.$$
For any $t\in\mathfrak{S}_{j,n-j}$ and $s\in\mathfrak{S}_{i,j-i}$, since both $t$ and $s$ have only one descent and they are at different positions, $l(ts)=l(t)+l(s)$; for any $\tau\in\mathfrak{S}_{i,n-i}$ and $\s\in\mathfrak{S}_{j-i,n-j}^{\uparrow i}$, the same reason implies $l(\tau\s)=l(\tau)+l(\s)$. By Lemma \ref{Lem:length}, the desired formula is a direct consequence of the formula (\ref{eq1}).
\par

We let $T_\s(V)$ denote the tensor space $T(V)$ endowed with the quantum shuffle product; the sub-algebra of $T_\s(V)$ generated by $V$ is called the quantum shuffle algebra (\cite{Ros95}, \cite{Ros98}); it will be denoted by $S_\s(V)$.

\begin{example}\label{Ex:S(V)}
When $\s:V\ts V\ra V\ts V$ is the flip: for any $v,w\in V$, $\s(v\ts w)=w\ts v$, the quantum shuffle product reduces to the shuffle product and the quantum shuffle algebra $S_\s(V)$  is the symmetric algebra $S(V)$ on $V$.
\end{example}

\subsection{Quantum quasi-shuffle algebras}
The quantum quasi-shuffle product is introduced in \cite{JRZ11} as a particular case of the construction in \cite{JR12}: it can be looked as either a quantization of the quasi-shuffle product \cite{Hoff} or a multiplicative deformation of the quantum shuffle product.
\par
Concretely, the construction of the quantum quasi-shuffle product associates to a braided algebra $(V,\s,m)$ a multiplicative structure (the quantum quasi-shuffle product) on the tensor space $T(V)$ involving the multiplication $m$ via the universal property (\cite{JR12},\cite{JRZ11}). The output is denoted by $T_{\s,m}(V)$ and its sub-algebra $Q_{m,\s}(V)$ generated by $V$ is called the quantum quasi-shuffle algebra. When the multiplication is identically zero, a braided algebra is nothing but a braided vector space and the associated quantum quasi-shufle algebra $Q_{\s,m}(V)$ coincides with the quantum shuffle algebra $S_\s(V)$.
\par
As the (quantum) shuffle product, the quantum quasi-shuffle product admits a combinatorial interpretation via lifting shuffles to a proper braid-type group \cite{F13}. We give a brief recollection on this construction.
\par
We introduced in \cite{F13} the bubble decomposition of elements in $\mathfrak{S}_n$: it is a set-theoretical bijection
$$\mathfrak{S}_n\overset{\sim}{\longrightarrow}\mathfrak{S}_{n-1,1}\times \mathfrak{S}_{n-2,1}\times\cdots\times \mathfrak{S}_{2,1}\times \mathfrak{S}_{1,1}$$
where $\mathfrak{S}_{k,1}=\mathfrak{S}_{k,1}^{\uparrow 0}$. For $\s\in\mathfrak{S}_n$, we denote $(\s^{(n-1)},\s^{(n-2)},\cdots,\s^{(1)})$ its image under the above bijection. If $\s^{(k)}\neq e$, we write
$$\s^{(k)}=s_{t_k(\s)}s_{t_k(\s)+1}\cdots s_k\in\mathfrak{S}_{k,1}$$
where $s_t$ is the transposition $(t,t+1)$ in $\mathfrak{S}_n$ and $1\leq t_k(\s)\leq k$. If $\s^{(k)}=e$, we set $t_k(\s)=0$.
\par
The bubble decomposition can be applied to obtain a reduced expression of a permutation in $\mathfrak{S}_n$: $\s=\s^{(n-1)}\s^{(n-2)}\cdots\s^{(1)}$ is a reduced expression of $\s$.
\par
A generalization of the Matsumoto-Tits section lifting a $(p,q)$-shuffle to $\mc[\GVB_n^+]$ is constructed in \cite{F13} in aim of encoding the quantum quasi-shuffle product. For any $p,q$ as above satisfying $p+q=n$, the generalized Matsumoto-Tits section is a map
$$Q_{p,q}:\mathfrak{S}_{p,q}\ra\mc[{\GVB}_n^+],$$
$$\s\mapsto\prod_{1\leq k\leq n}^{\s^{(k)}\neq e}(\s_{t_k}+(1-\delta_{t_k+1,t_{k+1}})\s_1\cdots\s_{t_k-1}\xi_{t_k})\s_{t_k+1}\cdots \s_k,$$
where $t_k=t_k(\s)$ and the product is executed in a descending way. It should be pointed out that the formula above is slightly different from the one in \cite{F13}: they are related by replacing $\s_1\cdots\s_{i-1}\xi_i$ by $\xi_i$. In fact, in the application to the quantum quasi-shuffle algebras, when acting on the tensor powers of a braided algebra, they give the same result after applying the deleting operator.
\par
These maps $Q_{p,q}$ can be trivially extended to the whole group $\mathfrak{S}_n$ by requiring that it takes zero value on elements not in $\mathfrak{S}_{p,q}$. It turns out to be a linear map $Q_{p,q}:\mc[\mathfrak{S}_n]\ra\mc[\GVB_n^+]$.
\par
These generalized Matsumoto-Tits sections $Q_{p,q}$ can be used to characterize the quantum quasi-shuffle product.
\par
Let $(\widehat{V},\s,m)$ be a braided algebra. We suppose moreover that\begin{enumerate}
\item there exists a specified element $\one\in\widehat{V}$; let $V$ be a linear complement of the subspace $\mc.\one$;
\item for any $v\in\widehat{V}$, 
\begin{equation}\label{eq2}
m(\one\ts v)=m(v\ts\one)=v,\ \ \s(\one\ts v)=v\ts\one\ \ \text{and} \ \s(v\ts\one)=\one\ts v;
\end{equation}
\item the braid $\s$ and the multiplication $m$ can be restricted to $V$ to yield a braided algebra $(V,\s,m)$.
\end{enumerate}  
Let $\s^m:\widehat{V}\ts \widehat{V}\ra \widehat{V}\ts \widehat{V}$ be the linear map defined by: for any $v,w\in \widehat{V}$,
$$\s^m(v\ts w)=\one\ts m(v\ts w).$$
The axioms of $(\widehat{V},\s,m)$ being a braided algebra are equivalent (\cite{F13}, Proposition 4.6) to the existence of a representation of $\mc[\GVB_3^+]$ on $\widehat{V}^{\ts 3}$ defined by:
$$\s_1\mapsto \s\ts\id,\ \ \s_2\mapsto\id\ts\s,\ \ \xi_1\mapsto \s^m\ts\id,\ \ \xi_2\mapsto\id\ts\s^m.$$
If $(\widehat{V},\s,m)$ is a braided algebra, for any $n\geq 2$, $\widehat{V}^{\ts n}$ admits a representation of $\mc[\GVB_n^+]$ via
$$\s_i\mapsto {\id}^{\ts (i-1)}\ts\s\ts{\id}^{\ts(n-i-1)}\ \ \text{and}\ \ \xi_i\mapsto {\id}^{\ts (i-1)}\ts\s^m\ts{\id}^{\ts(n-i-1)}.$$
Let $Q_{\s,m}(V)$ be the quantum quasi-shuffle algebra associated to the braided algebra $(V,\s,m)$ (\cite{JRZ11}) and $\ast$ be the multiplication in $Q_{\s,m}(V)$. A pure tensor $v_1\ts\cdots\ts v_k\in T(\widehat{V})$ is called \emph{deletable} if for any $i=1,\cdots,k$, either $v_i=\one$ or $v_i\in V$. The deleting operator $\bold{D}:T(\widehat{V})\ra T(V)$ is the linear map deleting all $\one$ in a deletable pure tensor. For example, if $v,w\in V$, $\bold{D}(v\ts\one\ts w)=v\ts w$.

\begin{theorem}[\cite{F13}]\label{Thm:1}
For any integers $p,q>0$ and $v_1,\cdots,v_{p+q}\in V$,
$$(v_1\ts\cdots\ts v_p)\ast(v_{p+1}\ts\cdots\ts v_{p+q})=\bold{D}\circ \sum_{s\in\mathfrak{S}_{p,q}}Q_{p,q}(s)(v_1\ts\cdots\ts v_{p+q}),$$
where the tensor product on the right hand side is looked in $\widehat{V}^{\ts(p+q)}$. 
\end{theorem}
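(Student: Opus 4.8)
The plan is to prove the identity by induction on $n=p+q$, comparing $\bold{D}\circ\sum_{s\in\mathfrak{S}_{p,q}}Q_{p,q}(s)$ with the inductive description of the quantum quasi-shuffle product recorded in \cite{JRZ11}. Recall that $\ast$ is characterised by the degree-two values $v_1\ast v_2=v_1\ts v_2+\s(v_1\ts v_2)+\bold{D}(\one\ts m(v_1\ts v_2))$ together with a three-term recursion obtained by inspecting which of the two leading letters occupies the first output slot, allowing the two leading letters to be fused by $m$:
\[
(v_1\ts a)\ast(w\ts b)=v_1\ts\big(a\ast(w\ts b)\big)+\big(\text{braided term carrying }w\text{ to the front past }v_1\big)+m(v_1\ts w)\text{-merge term of }\big(a\ast b\big).
\]
This has the same shape as the classical quasi-shuffle recursion $ua\ast vb=u(a\ast vb)+v(ua\ast b)+[u,v](a\ast b)$; the only differences are that the middle term carries the braiding $\s$ needed to transport $w$ across $v_1$ and that the last term replaces $[u,v]$ by $m(v_1\ts w)$. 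Both sides of the asserted formula are linear in $v_1,\dots,v_n$, so it suffices to show that the right-hand side obeys this recursion.

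On the $\GVB_n^+$ side, the plan is to reorganise $\sum_{s\in\mathfrak{S}_{p,q}}Q_{p,q}(s)$ — after expanding each factor $\s_{t_k}+(1-\delta_{t_k+1,t_{k+1}})\s_1\cdots\s_{t_k-1}\xi_{t_k}$ into its two summands — into three pieces according to the fate of the leading letters $v_1$ and $v_{p+1}$: either $v_1$ stays in the first output slot, or $v_{p+1}$ is brought to the front, or $v_1$ and $v_{p+1}$ are fused. The bubble decomposition of a $(p,q)$-shuffle, together with the relations~(2) of the presentation of $\GVB_n$, is the tool that makes this reorganisation and the attendant factorisation transparent: one shows that the first piece equals $i_1\!\big(\sum_{s\in\mathfrak{S}_{p-1,q}}Q_{p-1,q}(s)\big)$, the second equals a braid prefix transporting the $(p+1)$-st letter to the front, composed with $i_1\!\big(\sum_{s\in\mathfrak{S}_{p,q-1}}Q_{p,q-1}(s)\big)$, and the third equals a prefix containing $\xi_1$, composed with $i_2\!\big(\sum_{s\in\mathfrak{S}_{p-1,q-1}}Q_{p-1,q-1}(s)\big)$, the factor $\xi_1$ being the trace of a switched-on $\xi$-correction. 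Passing to the representation — $\s_i$ acting by $\s$, $\xi_i$ by $\s^m$ — the operator $\xi_1$ in the third piece fuses $v_1\ts v_{p+1}$ into $\one\ts m(v_1\ts v_{p+1})$, and the surrounding braids slide the spurious $\one$ into a slot that $\bold{D}$ will delete. The unit axioms~(\ref{eq2}) then guarantee that $\bold{D}$ intertwines the shifted representations $\rho^{\uparrow 1}$ on $\widehat{V}^{\ts(n-1)}$ and $\rho^{\uparrow 2}$ on $\widehat{V}^{\ts(n-2)}$ (acting on the tensor slots beyond the leading letter(s)) with the ambient $\GVB_n^+$-action, so that after $\bold{D}$ the three pieces become exactly the three terms of the recursion, with the induction hypothesis supplying $a\ast(w\ts b)$, the braided tail, and $a\ast b$ respectively. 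The base case $n=2$ is the displayed degree-two identity, immediate from $Q_{1,1}(e)=1$ and $Q_{1,1}(s_1)=\s_1+\xi_1$.

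I expect the main obstacle to be the bookkeeping of the $\xi$-corrections in this reorganisation. One must verify that the Kronecker factor $1-\delta_{t_k+1,t_{k+1}}$ makes every admissible fusion of the two ``runs'' of a $(p,q)$-shuffle contribute once and only once, with the correct braiding coefficient in front of $m(v_1\ts v_{p+1})$, and that the $\one$ produced by $\s^m$ never interferes with a later fusion — precisely the event the $\delta$ suppresses. This is kept under control by the rigid shape of the bubble coordinates of a $(p,q)$-shuffle: such a permutation has at most one descent, so the sequence $(t_{n-1}(\s),\dots,t_1(\s))$ consists of at most two monotone stretches, and the products defining $Q_{p,q}$ telescope against the mixed braid relations $\xi_i\s_{i+1}\s_i=\s_{i+1}\s_i\xi_{i+1}$ and $\xi_{i+1}\s_i\s_{i+1}=\s_i\s_{i+1}\xi_i$ in a controlled way. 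A subsidiary but essential input — already isolated in \cite{F13} as Proposition~4.6 — is that $\s_i\mapsto\id^{\ts(i-1)}\ts\s\ts\id^{\ts(n-i-1)}$ and $\xi_i\mapsto\id^{\ts(i-1)}\ts\s^m\ts\id^{\ts(n-i-1)}$ really do define a representation of $\mc[\GVB_n^+]$, since this legitimises the rewriting above and, together with~(\ref{eq2}), yields the needed compatibility of $\bold{D}$ with the shifted actions $\rho^{\uparrow k}$.
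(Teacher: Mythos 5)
The first thing to say is that the paper does not prove this statement: Theorem \ref{Thm:1} is imported verbatim from \cite{F13}, and Section \ref{Sec:2} only recalls the construction of $Q_{p,q}$, so there is no in-paper argument to compare yours against. That said, your route --- induction on $p+q$ against the three-term recursion for $\ast$ from \cite{JRZ11}, splitting $\sum_{s\in\mathfrak{S}_{p,q}}Q_{p,q}(s)$ into a ``$v_1$ stays first'' piece, a ``$v_{p+1}$ comes first'' piece, and a ``merge'' piece --- is precisely the ``direct proof \ldots using the decomposition $\mathfrak{S}_{p,q}=\mathfrak{S}_{p-1,q}^{\uparrow 1}\cup\mathfrak{S}_{p,q-1}^{\uparrow 1}s_1\cdots s_p$'' that the author himself advertises in the Remark closing Section 3, and your identification of the three pieces with the three terms of the recursion, as well as the base case $n=2$, are correct. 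It is the natural proof, and consistent with everything the paper verifies in its examples.

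Two caveats. First, the factorisations you assert (``the first piece equals $i_1\bigl(\sum_{s\in\mathfrak{S}_{p-1,q}}Q_{p-1,q}(s)\bigr)$,'' and similarly for the other two) are \emph{not} identities in $\mc[{\GVB}_n^+]$ with this paper's normalisation of $Q_{p,q}$: the correction terms here are $\s_1\cdots\s_{t_k-1}\xi_{t_k}$, whose braid prefix always starts at index $1$, whereas the shifted copy $i_1(Q_{p-1,q})$ would start it at index $2$. These factorisations only hold after passing to the representation on $\widehat{V}^{\ts n}$, composing with $\bold{D}$, and using (\ref{eq2}) to slide the spurious $\one$'s to the front --- you do invoke exactly this, but the intermediate claims must be stated from the outset as equalities of operators on deletable tensors (or after $\bold{D}$), not as equalities in $\mc[{\GVB}_n^+]$. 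Second, the step you yourself flag as the main obstacle --- that after expanding each factor $\s_{t_k}+(1-\delta_{t_k+1,t_{k+1}})\s_1\cdots\s_{t_k-1}\xi_{t_k}$ the sum over $\mathfrak{S}_{p,q}$ regroups into exactly these three pieces, with the $\delta$-factors producing each admissible merge once and only once and suppressing merges of two letters of the second word --- is the entire combinatorial content of the theorem and is described but not carried out. As written, the proposal is a well-chosen and, in my judgement, viable plan rather than a complete proof.
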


\section{Lifting permutations of two descents and combinatorial associativity}

One of the goals of this paper is to lift an arbitrary permutation to the algebra of the generalized virtual braid monoid, generalizing the Matsumoto-Tits section. The main idea is to iterate the lifting of shuffles according to the descents of a permutation; the difficulties lie in the case of two descents: the general case would become transparent thereafter.

\subsection{Lift permutations of two descents}

For $\s\in\mathfrak{S}_n$, the descent set of $\s$ is defined by
$${\des}_n(\s)=\{1\leq i\leq n-1|\ \s^{-1}(i)>\s^{-1}(i+1)\}.$$
For any subset $I\subset\{1,\cdots,n-1\}$, let
$${\des}_n(I)=\{\s\in\mathfrak{S}_n|\ {\des}_n(\s)=I\}$$
denote the set of permutations having descents exactly at positions prescribed in $I$. We will also use the notation
$${\des}_n(\leq I)=\{\s\in\mathfrak{S}_n|\ {\des}_n(\s)\subset I\}=\bigcup_{J\subset I}{\des}_n(J).$$

\begin{example}
\begin{enumerate}
\item For $1\leq i\leq n-1$, it is easy to see from definition that $\{e\}\cup{\des}_n(\{i\})=\mathfrak{S}_{i,n-i}$ is the set of $(i,n-i)$-shuffles.
\item For $n=3$, $I=\{1,2\}$, ${\des}_n(I)=\{s_1s_2s_1\}$ and $\des_n(\leq I)=\mathfrak{S}_3$.
\item For $n=4$, $I=\{1,3\}$, 
$${\des}_4(I)=\{s_1s_3,s_2s_3s_1,s_3s_2s_1s_3,s_1s_2s_3s_1,s_3s_2s_1s_2s_3\},$$
$${\des}_4(\leq I)={\des}_4(I)\cup\{e,s_1,s_2s_1,s_3s_2s_1,s_3,s_2s_3,s_1s_2s_3\}.$$
\item Let $w_0$ be the longest element in $\mathfrak{S}_n$. For $I=\{1,\cdots,n-1\}$, ${\des}_n(I)=\{w_0\}$ and ${\des}_n(\leq I)=\mathfrak{S}_n$.
\end{enumerate}
\end{example}

From now on we fix the following situation: let $I=\{i,j\}\subset \{1,\cdots,n-1\}$ such that $i<j$ and $\s\in{\des}_n(I)$. Since $\s$ has exactly two descents at positions $i$ and $j$ and ${\des}_n(\{i\})=\mathfrak{S}_{i,n-i}\backslash\{e\}$, there exists a map:
$${\des}_n(I)\ra\mathfrak{S}_{j,n-j}\times\mathfrak{S}_{i,j-i}$$
where $\mathfrak{S}_{i,j-i}=\mathfrak{S}_{i,j-i}^{\uparrow 0}$. Here $\mathfrak{S}_{i,j-i}$ encodes the descent at the position $i$ and $\mathfrak{S}_{j,n-j}$ corresponds to the descent at the position $j$. We denote $(\s_L,\s_R)$ the image of $\s\in{\des}_n(I)$ under this map. It is clear that this map is injective but not surjective.

\begin{lemma}\label{Lem:bij2}
There exists a bijection 
$${\des}_n(\leq I)\overset{\sim}{\longrightarrow}\mathfrak{S}_{j,n-j}\times \mathfrak{S}_{i,j-i},$$
extending the above map.
\end{lemma}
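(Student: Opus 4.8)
The plan is to build the bijection explicitly on the level of (inverse) one-line notations. The first step is to note that, for $I=\{i,j\}$ with $i<j$, a permutation $\s$ lies in $\des_n(\leq I)$ exactly when the word $(\s^{-1}(1),\dots,\s^{-1}(n))$ is the concatenation of three increasing blocks of lengths $i$, $j-i$ and $n-j$; equivalently, $\s$ is encoded by an ordered set partition $(A,B,C)$ of $\{1,\dots,n\}$ with $|A|=i$, $|B|=j-i$, $|C|=n-j$ (take $A=\{\s^{-1}(1),\dots,\s^{-1}(i)\}$, $B=\{\s^{-1}(i+1),\dots,\s^{-1}(j)\}$, $C=\{\s^{-1}(j+1),\dots,\s^{-1}(n)\}$). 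In particular $|\des_n(\leq I)|=\binom{n}{j}\binom{j}{i}=|\mathfrak{S}_{j,n-j}|\cdot|\mathfrak{S}_{i,j-i}|$, so a bijection exists for cardinality reasons; the content is to produce a natural one that restricts to the map $\s\mapsto(\s_L,\s_R)$ on $\des_n(I)$.

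The map I would use sends $\s$, with associated triple $(A,B,C)$, to $(\s_L,\s_R)$ where $\s_L\in\mathfrak{S}_{j,n-j}$ is the unique $(j,n-j)$-shuffle with $\{\s_L^{-1}(1),\dots,\s_L^{-1}(j)\}=A\cup B$ (its inverse one-line word being $A\cup B$ listed increasingly, then $C$), and $\s_R=\tau^{\uparrow 0}$ with $\tau\in\mathfrak{S}_{i,j-i}\subset\mathfrak{S}_j$ the standardization of the length-$j$ word $(\s^{-1}(1),\dots,\s^{-1}(j))$; since that word has its only possible descent at position $i$, the standardization is indeed an $(i,j-i)$-shuffle, and it records which positions in the increasing listing of $A\cup B$ come from $A$. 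The inverse map is equally explicit: from $\s_L$ one recovers the split $\{1,\dots,n\}=(A\cup B)\sqcup C$, from $\s_R$ one recovers the sub-split $A\cup B=A\sqcup B$, and reassembling gives back $(A,B,C)$, hence $\s$. One then checks that the two maps are mutually inverse --- this amounts to the evident bijection $(A,B,C)\leftrightarrow\big((A\cup B, C), A\big)$ --- and that, when $\s$ has descents exactly at $i$ and $j$, the first map reproduces the pair $(\s_L,\s_R)$ introduced before the statement; the latter is immediate from the definition of that pair.

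Conceptually this is nothing but the transitivity of minimal-length coset representatives along the chain of parabolic subgroups $\mathfrak{S}_i\times\mathfrak{S}_{j-i}\times\mathfrak{S}_{n-j}\subset\mathfrak{S}_j\times\mathfrak{S}_{n-j}\subset\mathfrak{S}_n$: $\s$ admits a length-additive factorization into an element of $\mathfrak{S}_{j,n-j}$ and an element of $\mathfrak{S}_{i,j-i}^{\uparrow 0}$ (in the order dictated by the conventions fixed in Section \ref{Sec:2}), with $\mathfrak{S}_{j,n-j}$ recording the minimal representative modulo $\mathfrak{S}_j\times\mathfrak{S}_{n-j}$ and $\mathfrak{S}_{i,j-i}^{\uparrow 0}$ the remaining factor inside that parabolic. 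Proving the statement this way has the bonus of recording the length-additivity, which is what will make the construction interact well with the Matsumoto--Tits section via Lemma \ref{Lem:length}. Either way, the only real difficulty is bookkeeping: pinning down left versus right descents, keeping $\s$ and $\s^{-1}$ straight, verifying that $\s_L$ is a genuine $(j,n-j)$-shuffle and the standardized prefix a genuine $(i,j-i)$-shuffle, and --- in the coset-theoretic route --- checking that gluing back an arbitrary pair from $\mathfrak{S}_{j,n-j}\times\mathfrak{S}_{i,j-i}^{\uparrow 0}$ produces no descent outside $\{i,j\}$, which is exactly where minimality of the $\mathfrak{S}_{j,n-j}$-factor in its coset is used. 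There is no deeper obstacle; no associativity or braid input is needed, only the combinatorics of descents.
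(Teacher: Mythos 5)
Your argument is correct and produces the same bijection as the paper's (the one whose inverse is given by multiplication), but by a different route. The paper does not prove Lemma \ref{Lem:bij2} directly: it defers to the general Lemma \ref{Lem:decomp}, whose proof checks that the multiplication map $\mathfrak{S}_{j,n-j}\times\mathfrak{S}_{i,j-i}\to\mathfrak{S}_n$ lands in ${\des}_n(\leq I)$, asserts injectivity, and then gets surjectivity by matching cardinalities, quoting the standard enumeration $|{\des}_n(\leq I)|=\frac{n!}{i!\,(j-i)!\,(n-j)!}$ from the literature. You instead encode ${\des}_n(\leq I)$ by ordered set partitions $(A,B,C)$ with $|A|=i$, $|B|=j-i$, $|C|=n-j$ and exhibit the map and its inverse explicitly, reducing everything to the tautological correspondence $(A,B,C)\leftrightarrow\bigl((A\cup B,C),A\bigr)$; this is self-contained (the cardinality count becomes a corollary rather than an input), and the parabolic-coset reformulation additionally records the length-additivity $l(\s)=l(\s_L)+l(\s_R)$, which is precisely what the paper exploits elsewhere via Lemma \ref{Lem:length}. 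The one point worth writing out explicitly, since Lemma \ref{Lem:decomp} characterizes the inverse of the bijection as multiplication, is that your pair $(\s_L,\s_R)$ really does satisfy $\s=\s_L\s_R$ with $\s_R$ supported on $\{1,\dots,j\}$; this follows from your standardization description of $\s_R$ (one can confirm it on the paper's $n=4$, $I=\{1,3\}$ table, e.g. $s_2s_3s_1\mapsto(s_2s_3,s_1)$), but it is an assertion, not a consequence of the mere existence of a bijection. No gap otherwise.
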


The proof of this lemma will be given later (Lemma \ref{Lem:decomp}), as a special case of a general result on permutations of arbitrary descents.
\par

For an element $\tau\in\GVB_n^+$, we can define three length functions:
\begin{enumerate}
\item the braid length $l_s(\tau)$ counting the appearance of the $\s_i$ in any expression of $\tau$ for $i=1,\cdots,n-1$;
\item the virtual length $l_v(\tau)$ counting the appearance of the $\xi_i$ in any expression of $\tau$ for $i=1,\cdots,n-1$;
\item the total length $l(\tau)=l_s(\tau)+l_v(\tau)$.
\end{enumerate}
For $p+q=n$, we define a bilinear map
$${\mm}_{p,q}:\mc[\mathfrak{S}_n]\times\mc[{\GVB}_n^+]\ra\mc[{\GVB}_n^+]$$
as follows: for $\s\in\mathfrak{S}_n$ and $\tau\in\GVB_n^+$,
\[{\mm}_{p,q}(\s,\tau)=\left\{
\begin{matrix}
Q^{\ra l_v(\tau)}_{p-l_v(\tau),q}(\s)\tau\ \ \ & \text{when}\  p-l_v(\tau)>0\ \text{and}\ \s\in\mathfrak{S}_{p-l_v(\tau),q}^{\uparrow l_v(\tau)},\\
0\ \ \  & \text{otherwise},
\end{matrix} \right.
\]
where the notation $Q_{p-l_v(\tau),q}^{\ra l_v(\tau)}$ stands for the action of $Q_{p-l_v(\tau),q}$ on the positions $l_v(\tau)+1,\cdots,n$ (recall that the maps $Q_{p,q}$ have been extended to the entire $\mc[\mathfrak{S}_n]$ by zero.)

Explicitly, if $p-l_v(\tau)>0$ and $\s\in\mathfrak{S}_{p-l_v(\tau),q}^{\uparrow l_v(\tau)}$, 
$${\mm}_{p,q}(\s,\tau)=\left(\prod_{1\leq k\leq n}^{\s^{(k)}\neq e}\left(\s_{t_k}+(1-\delta_{t_k+1,t_{k+1}})\s_1\cdots\s_{t_k-1}\xi_{t_k}\right)\s_{t_{k+1}}\cdots\s_k\right)\tau.$$
We write $\mm_{p,q}(\s):=\mm_{p,q}(\s,e)$ for $\s\in\mathfrak{S}_n$. These maps allow us to define the lifting of permutations of two descents:
$$Q_I:{\des}_n(I)\ra\mc[{\GVB}_n^+],\ \ Q_I(\s)={\mm}_{j,n-j}(\s_L,{\mm}_{i,j-i}(\s_R)).$$
Together with $Q_{p,q}$, we obtain a map $Q_{\leq I}:\des_n(\leq I)\ra \mc[\GVB_n^+]$ by lifting permutations of one descent via $Q_{p,q}$ and two descents via $Q_I$.

\begin{theorem}\label{Thm:2}
Let $v_1,\cdots,v_n\in V$. Then
$$((v_1\ts\cdots\ts v_i)\ast(v_{i+1}\ts\cdots\ts v_j))\ast(v_{j+1}\ts\cdots\ts v_n)=\bold{D}\circ\sum_{\s\in\des_n(\leq I)}Q_{\leq I}(\s)(v_1\ts\cdots\ts v_n),$$
where the tensor product on the right hand side is looked in $\widehat{V}^{\ts n}$.
\end{theorem}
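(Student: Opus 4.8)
The plan is to reduce the associativity statement to the combinatorial identity for the lifting operators, exactly in the spirit of the (quantum) shuffle case, but now at the level of $\mc[\GVB_n^+]$ acting on $\widehat{V}^{\ts n}$ and with the deleting operator $\bold{D}$ applied at the end. First I would expand the left hand side using Theorem \ref{Thm:1} twice. Applying it to the inner product gives $(v_1\ts\cdots\ts v_i)\ast(v_{i+1}\ts\cdots\ts v_j)=\bold{D}\circ\sum_{s\in\mathfrak{S}_{i,j-i}}Q_{i,j-i}(s)(v_1\ts\cdots\ts v_j)$, an element of $T^{\leq j}(V)$; applying Theorem \ref{Thm:1} again to multiply by $(v_{j+1}\ts\cdots\ts v_n)$ produces an outer sum over $\mathfrak{S}_{j,n-j}$ of operators $Q_{j,n-j}(t)$. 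The key point—already implicit in the definition of $\mm_{p,q}$—is that because $\bold{D}$ has been applied after the first product, the second application of $Q_{j,n-j}$ must act on the \emph{non-deleted} tensor, which in $\widehat{V}^{\ts n}$ corresponds to shifting the action of the outer shuffle past the $l_v$ virtual crossings already produced. This is precisely the bookkeeping encoded by the superscript $\ra l_v(\tau)$ in the definition of $\mm_{j,n-j}$, so the composite of the two applications of Theorem \ref{Thm:1} rewrites as $\bold{D}\circ\sum_{s,t}\mm_{j,n-j}(t,\mm_{i,j-i}(s))(v_1\ts\cdots\ts v_n)$.

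Next I would reorganize this double sum along the bijection of Lemma \ref{Lem:bij2}. The inner sum is over $s\in\mathfrak{S}_{i,j-i}$ and, for each $s$, the term $\mm_{i,j-i}(s)$ already carries a well-defined braid/virtual length, so $\mm_{j,n-j}(t,\mm_{i,j-i}(s))$ is nonzero precisely when $t$ lies in the appropriate shifted copy $\mathfrak{S}_{j-l_v,n-j}^{\uparrow l_v}$; pairs $(t,s)$ with $\mm_{j,n-j}(t,\mm_{i,j-i}(s))\neq 0$ are then seen to be in bijection with $\mathfrak{S}_{j,n-j}\times\mathfrak{S}_{i,j-i}$ after relabeling, and this is exactly the index set of $Q_{\leq I}$ via Lemma \ref{Lem:bij2}: a permutation $\s\in\des_n(\leq I)$ with image $(\s_L,\s_R)$ contributes $Q_{\leq I}(\s)=\mm_{j,n-j}(\s_L,\mm_{i,j-i}(\s_R))$, while permutations with strictly fewer descents correspond to the degenerate cases $\s_R=e$ or $\s_L=e$, matching the zero/one-descent terms in the definition of $Q_{\leq I}$. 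Thus the double sum is literally $\sum_{\s\in\des_n(\leq I)}Q_{\leq I}(\s)(v_1\ts\cdots\ts v_n)$, and applying $\bold{D}$ gives the right hand side.

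The main obstacle I expect is not the indexing but the interchange of $\bold{D}$ with the second application of $Q_{j,n-j}$: a priori one applies the deleting operator after the first product and \emph{then} multiplies, whereas the claimed formula applies a single $\bold{D}$ at the very end. One must check that deleting the $\one$'s first and then lifting the outer shuffle on the shortened tensor gives the same result as lifting the (shifted) outer shuffle on the full tensor in $\widehat{V}^{\ts n}$ and deleting afterwards. This is where the normalization axioms (\ref{eq2})—$m(\one\ts v)=m(v\ts\one)=v$ and $\s$ acting trivially on $\one$—are essential: they guarantee that a crossing $\s$ or $\s^m$ passing a $\one$ either moves the $\one$ without interaction or multiplies it away consistently, so that $\bold{D}$ is ``equivariant'' for the two descriptions. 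I would isolate this as the technical heart of the argument, proving a lemma that $\bold{D}\circ Q_{p,q}^{\ra k}(\s)=\bold{D}\circ Q_{p',q}(\bar\s)\circ\bold{D}$ on the relevant tensors (with $\bar\s$ the induced shuffle after deletion), and then the theorem follows by combining this lemma with the two applications of Theorem \ref{Thm:1} and the bijection of Lemma \ref{Lem:bij2}. The remaining verifications—that $l_v$ of $\mm_{i,j-i}(s)$ records exactly the number of $\one$'s inserted, and that the shifted shuffle set $\mathfrak{S}_{j-l_v,n-j}^{\uparrow l_v}$ is the correct target—are then routine consequences of the explicit formula for $Q_{p,q}$ and the bubble decomposition.
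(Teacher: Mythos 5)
Your proposal follows the same overall route as the paper's proof: expand the left-hand side by two applications of Theorem \ref{Thm:1}, observe that the shift $\ra l_v(\tau)$ in $\mm_{j,n-j}$ is exactly the bookkeeping needed for the second application, and reindex the resulting double sum over $\mathfrak{S}_{j,n-j}\times\mathfrak{S}_{i,j-i}$ via Lemma \ref{Lem:bij2} (including the degenerate cases $\s_L=e$ or $\s_R=e$), with one caveat: the non-vanishing condition for $\mm_{j,n-j}(t,\cdot)$ must be checked summand by summand of $Q_{i,j-i}(s)$, since distinct summands $\tau$ have distinct virtual lengths $l_v(\tau)$, which is why the paper's proof fixes a single summand $\tau$ before invoking Theorem \ref{Thm:1}. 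The one genuine divergence is in what you call the technical heart. You propose to prove an equivariance lemma for $\bold{D}$ using the normalization axioms (\ref{eq2}), so that crossings passing over $\one$'s interact harmlessly with deletion; this would work, but the paper resolves the issue more cheaply: because each $\xi_{t_k}$ in the explicit formula for $Q_{p,q}$ is preceded by $\s_1\cdots\s_{t_k-1}$, every summand $\tau$ of $Q_{i,j-i}(s)$ sends $v_1\ts\cdots\ts v_j$ into $\one^{\ts l_v(\tau)}\ts V^{\ts(j-l_v(\tau))}$, i.e.\ all the $\one$'s are already pushed to the leading positions. The shifted operator $Q^{\ra l_v(\tau)}_{j-l_v(\tau),n-j}$ then acts only on the non-$\one$ tensorands, so it literally commutes with deleting the initial $\one$'s and no crossing ever meets a $\one$; the axioms (\ref{eq2}) are not needed at this step. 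Your ``relevant tensors'' are precisely these, and identifying them (the paper's step (3)) is the observation that makes your lemma a tautology rather than a computation.
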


\begin{proof}
We claim that it suffices to show: for $s\in\mathfrak{S}_{i,j-i}$,
$$\bold{D}((\bold{D}\circ Q_{i,j-i}(s)(v_1\ts\cdots\ts v_j))\ast(v_{j+1}\ts\cdots\ts v_n))=\bold{D}\circ\sum_{t\in\mathfrak{S}_{j,n-j}}\bold{M}_{j,n-j}(t,\bold{M}_{i,j-i}(s))(v_1\ts\cdots\ts v_n).$$
Indeed, once this is established, we have
\begin{eqnarray*}
& & ((v_1\ts\cdots\ts v_i)\ast (v_{i+1}\ts\cdots\ts v_j))\ast (v_{j+1}\ts\cdots\ts v_n) \\
&=& \bold{D}\left(\left(\bold{D}\left(\sum_{s\in\mathfrak{S}_{i,j-i}} Q_{i,j-i}(s)(v_1\ts\cdots\ts v_j)\right)\right)\ast(v_{j+1}\ts\cdots\ts v_n)\right)\\
&=& \bold{D}\left(\sum_{s\in\mathfrak{S}_{i,j-i}}\sum_{t\in\mathfrak{S}_{j,n-j}}\bold{M}_{j,n-j}(t,\bold{M}_{i,j-i}(s))(v_1\ts\cdots\ts v_n)\right)\\
&=& \bold{D} \left(\sum_{\s\in{\des}_n(\leq I)}Q_{\leq I}(\s)(v_1\ts\cdots\ts v_n)\right)
\end{eqnarray*}
where the above formula, Lemma \ref{Lem:bij2} and the definition of $Q_{\leq I}$ are applied.
\par
The proof of the above formula is executed in several steps:
\begin{enumerate}
\item Since $s\in\mathfrak{S}_{i,j-i}$, by definition, $\bold{M}_{i,j-i}(s)=Q_{i,j-i}(s)$. Let $\tau\in\GVB_n^+$ be a summand of $Q_{i,j-i}(s)$. It suffices to show that
$$\bold{D}((\bold{D}\circ \tau(v_1\ts\cdots\ts v_j))\ast (v_{j+1}\ts\cdots\ts v_n))=\bold{D}\left(\sum_{t\in\mathfrak{S}_{j,n-j}}\bold{M}_{j,n-j}(t,\tau)(v_1\ts\cdots\ts v_n)\right).$$
\item By definition, the right hand side of the above formula is 
$$\bold{D}\left(\sum_{t\in\mathfrak{S}_{j,n-j}}Q_{j-l_v(\tau),n-j}^{\ra l_v(\tau)}(t)\tau (v_1\ts\cdots\ts v_n)\right).$$
\item We show that $\bold{D}\circ\tau(v_1\ts\cdots\ts v_j)\in T^{j-l_v(\tau)}(V)$. Since $\tau\in\GVB_n^+$ is a summand of $Q_{i,j-i}(s)$, by definition, $\tau(v_1\ts\cdots\ts v_j)$ contains those pure tensors of form $\bold{1}^{\ts l_v(\tau)}\ts w_1\ts\cdots\ts w_{j-l_v(\tau)}$ where $w_1,\cdots,w_{j-l_v(\tau)}\in V$ as summands: indeed, it suffices to notice that each appearance of $\xi_k$ is followed by $\s_1\cdots\s_{k-1}$ which pushes the $\bold{1}$ arising from the action of $\xi_k$ to the first tensorand. Therefore the deleting operator sends $\tau(v_1\ts\cdots\ts v_j)$ to $T^{j-l_v(\tau)}(V)$.
\item By the point (3) and Theorem \ref{Thm:1},
\begin{eqnarray*}
& &(\bold{D}\circ \tau(v_1\ts\cdots\ts v_j))\ast (v_{j+1}\ts\cdots\ts v_n)\\
&=& \sum_{t\in\mathfrak{S}_{j-l_v(\tau),n-j}}Q_{j-l_v(\tau),n-j}(t) ((\bold{D}\circ \tau(v_1\ts\cdots\ts v_j))\ts v_{j+1}\ts\cdots\ts v_n)\\
&=&  \bold{D}\left(\sum_{t\in\mathfrak{S}_{j,n-j}}Q_{j-l_v(\tau),n-j}^{\ra l_v(\tau)}(t) \tau(v_1\ts\cdots\ts v_n)\right).\\
\end{eqnarray*}
The proof is then complete.
\end{enumerate}
\end{proof}

\begin{example}
We take $n=3$, $I=\{1,2\}$ and compute the image of $Q_{\leq I}$. First, $\des_3(\leq I)=\mathfrak{S}_3$, and the decomposition above is $$\mathfrak{S}_3\overset{\sim}{\longrightarrow}\mathfrak{S}_{2,1}\times\mathfrak{S}_{1,1},$$
which is nothing but the bubble decomposition of $\mathfrak{S}_3$:
$$e\mapsto (e,e),\ \ s_1\mapsto (e,s_1),\ \ s_2\mapsto (s_2,e),\ \ s_1s_2\mapsto (s_1s_2,e),$$
$$s_2s_1\mapsto (s_2,s_1),\ \ s_1s_2s_1\mapsto (s_1s_2,s_1).$$
The images of these elements under $Q_I$ are:
$$Q_I(s_1s_2s_1)={\mm}_{2,1}(s_1s_2,{\mm}_{1,1}(s_1))={\mm}_{2,1}(s_1s_2,\s_1+\xi_1)=\s_1\s_2\s_1+\xi_1\s_2\s_1,$$
for the last equality, notice that $\mm_{2,1}(s_1s_2,\xi_1)=0$ since it is $Q_{1,1}(s_1s_2)\xi_1=0$ by definition.
We obtain similarly
$$Q_{\leq I}(s_2s_1)={\mm}_{2,1}(s_2,{\mm}_{1,1}(s_1))=(\s_2+\s_1\xi_2)\s_1+Q_{1,1}^{\ra 1}(s_2)\xi_1=\s_2\s_1+\s_1\xi_2\s_1+\s_1\xi_2\xi_1+\s_2\xi_1.$$
$$Q_{\leq I}(s_1s_2)={\mm}_{2,1}(s_1s_2,{\mm}_{1,1}(e))=\s_1\s_2+\xi_1\s_2,$$
$$Q_{\leq I}(s_1)=\s_1+\xi_1,\ \ Q_{\leq I}(s_2)=\s_2+\s_1\xi_2,\ \ Q_{\leq I}(e)=e.$$
For $v_1,v_2,v_3\in V$, the iterated product $(v_1\ast v_2)\ast v_3$ can be computed inductively by definition:

\begin{eqnarray*}
(v_1\ast v_2)\ast v_3 &=& (v_1\ts v_2)\ast v_3+(\s_1(v_1\ts v_2))\ast v_3+\bold{D}(\one\ts (m(v_1\ts v_2)\ast v_3))\\
&=& \bold{D} ((e+\s_2+\s_1\xi_2+\s_1\s_2+\xi_1\s_2+\s_1+\s_2\s_1+\s_1\xi_2\s_1+\s_1\s_2\s_1+\\
& &\ \ +\xi_1\s_2\s_1+\xi_1+\s_2\xi_1+\s_1\xi_2\xi_1)(v_1\ts v_2\ts v_3)).
\end{eqnarray*}
Comparing elements contained in $\mc[\GVB_3^+]$ gives
$$(v_1\ast v_2)\ast v_3=\bold{D}\left(\sum_{\s\in{\des}_3(\leq\{1,2\})}Q_{\leq I}(\s)(v_1\ts v_2\ts v_3)\right).$$
\end{example}
\begin{example}
We study a more complicated example by taking $n=4$ and $I=\{1,3\}$. First, we enumerate $\des_4(\leq I)$:
\begin{eqnarray*}
{\des}_4(\leq I) &=& \mathfrak{S}_{1,3}\cup \mathfrak{S}_{3,1}\cup{\des}_4(I)\\
&=&\{e,s_1,s_2s_1,s_3s_2s_1,s_3,s_2s_3,s_1s_2s_3,s_1s_3,s_2s_3s_1,s_3s_2s_1s_3,s_1s_2s_3s_1,s_3s_2s_1s_2s_3\}.
\end{eqnarray*}
The computations are listed in the following table:\\
\par

\begin{tabular}{|c|c|c|}
\hline
\emph{Permutation} & \emph{Decomposition} & \emph{Image under $Q_{\leq I}$} \\
\hline
$s_3s_2s_1s_2s_3$ & $(s_1s_2s_3,s_2s_1)$ & $\s_1\s_2\s_3\s_2\s_1+\xi_1\s_2\s_3\s_2\s_1$ \\
\hline
$s_1s_2s_3s_1$ & $(s_1s_2s_3,s_1)$ & $\s_1\s_2\s_3\s_1+\xi_1\s_2\s_3\s_1$ \\
\hline
$s_3s_2s_1s_3$ & $(s_2s_3,s_2s_1)$ & $\s_2\s_3\s_2\s_1+\s_1\xi_2\s_3\s_2\s_1+\s_2\s_3\s_1\xi_2\s_1+\s_1\xi_2\s_3\s_1\xi_1\s_1$ \\
\hline
$s_2s_3s_1$ & $(s_2s_3,s_1)$ & $\s_2\s_3\s_1+\s_1\xi_2\s_3\s_1+\s_2\s_3\xi_1+\s_1\xi_2\s_3\xi_1$ \\
\hline
$s_1s_3$ & $(s_3,s_1)$ & $\s_3\s_1+\s_1\s_2\xi_3\s_1+\s_3\xi_1+\s_1\s_2\xi_3\xi_1$ \\
\hline
$s_1s_2s_3$ & $(s_1s_2s_3,e)$ & $\s_1\s_2\s_3+\xi_1\s_2\s_3$ \\
\hline
$s_2s_3$ & $(s_2s_3,e)$ & $\s_2\s_3+\s_1\xi_2\s_3$ \\
\hline
$s_3$ & $(s_3,e)$ & $\s_3+\s_1\s_2\xi_3$ \\
\hline
$s_3s_2s_1$ & $(s_3,s_2s_1)$ & $\s_3\s_2\s_1+\s_1\s_2\xi_3\s_2\s_1+\s_3\s_1\xi_2\s_1+\s_1\s_2\xi_3\s_1\xi_2\s_1$ \\
\hline
$s_2s_1$ & $(e,s_2s_1)$ & $\s_2\s_1+\s_1\xi_2\s_1$ \\
\hline
$s_1$ & $(e,s_1)$ & $\s_1+\xi_1$ \\
\hline
$e$ & $(e,e)$ & $e$ \\
\hline
\end{tabular}
\\
\par
It is then straightforward to verify that 
$$(v_1\ast(v_2\ts v_3))\ast v_4=\bold{D}\left(\sum_{\s\in{\des}_4(\leq I)}Q_{\leq I}(\s)(v_1\ts v_2\ts v_3\ts v_4)\right).$$
\end{example}

There is another decomposition of ${\des}_n(\leq I)$ with $I=\{i,j\}$ for $1\leq i<j\leq n-1$: it is a bijection
$${\des}_n(\leq I)\overset{\sim}{\longrightarrow}\mathfrak{S}_{i,n-i}\times \mathfrak{S}_{j-i,n-j}^{\uparrow i},\ \ \s\mapsto(\s_L',\s_R').$$
This decomposition gives a different way to lift permutations of two descents. We define a map $Q_I':{\des}_n(I)\ra \mc[\GVB_n^+]$ by:
$$Q_I'(\s)={\mm}_{i,n-i}'(\s_L',{\mm}_{j-i,n-j}'^{\uparrow i}(\s_R'))$$
where
\[{\mm}_{p,q}'(\s,\tau)=\left\{
\begin{matrix}
Q^{\ra l_v(\tau)}_{p,q-l_v(\tau)}(\s)\tau\ \ \ & \text{when}\  q-l_v(\tau)>0\ \text{and}\ \s\in\mathfrak{S}_{p,q-l_v(\tau)}^{\uparrow l_v(\tau)},\\
0\ \ \  & \text{otherwise}.
\end{matrix} \right.
\]
Together with $Q_{p,q}$, we obtain a map $Q_{\leq I}':{\des}_n(\leq I)\ra\mc[\GVB_n^+]$. Similar to the proof of Theorem \ref{Thm:2}, we have the following

\begin{theorem}\label{Thm:2bis}
Let $v_1,\cdots,v_n\in V$. Then
$$(v_1\ts\cdots\ts v_i)\ast((v_{i+1}\ts\cdots\ts v_j)\ast(v_{j+1}\ts\cdots\ts v_n))=\bold{D}\circ\sum_{\s\in\des_n(\leq I)}Q_{\leq I}'(\s)(v_1\ts\cdots\ts v_n),$$
where the tensor product on the right hand side is looked in $\widehat{V}^{\ts n}$.

\end{theorem}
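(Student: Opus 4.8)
The plan is to run the argument parallel to the proof of Theorem \ref{Thm:2}, with the two outer tensor factors exchanged and $\mm_{p,q}$, $Q_{\leq I}$ replaced by $\mm'_{p,q}$, $Q'_{\leq I}$. First I would reduce the identity to the claim that, for every $s\in\mathfrak{S}_{j-i,n-j}^{\uparrow i}$ (which has at most one descent, so that the inner factor of $Q'_I$ at $s$ is just $Q_{j-i,n-j}^{\uparrow i}(s)$),
$$\bold{D}\Big((v_1\ts\cdots\ts v_i)\ast\big(\bold{D}\circ Q_{j-i,n-j}^{\uparrow i}(s)(v_{i+1}\ts\cdots\ts v_n)\big)\Big)=\bold{D}\circ\sum_{t\in\mathfrak{S}_{i,n-i}}{\mm}'_{i,n-i}\big(t,\,Q_{j-i,n-j}^{\uparrow i}(s)\big)(v_1\ts\cdots\ts v_n).$$
Summing this over $s\in\mathfrak{S}_{j-i,n-j}^{\uparrow i}$ and applying Theorem \ref{Thm:1} on the positions $i+1,\dots,n$ turns the left-hand side into $(v_1\ts\cdots\ts v_i)\ast\big((v_{i+1}\ts\cdots\ts v_j)\ast(v_{j+1}\ts\cdots\ts v_n)\big)$, while the bijection $\des_n(\leq I)\overset{\sim}{\longrightarrow}\mathfrak{S}_{i,n-i}\times\mathfrak{S}_{j-i,n-j}^{\uparrow i}$ and the definition of $Q'_{\leq I}$ turn the right-hand side into $\bold{D}\circ\sum_{\s\in\des_n(\leq I)}Q'_{\leq I}(\s)(v_1\ts\cdots\ts v_n)$; this is the same manipulation as in the displayed derivation inside the proof of Theorem \ref{Thm:2}.

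To prove the claim I would imitate the four steps there, fixing a monomial summand $\tau$ of $Q_{j-i,n-j}^{\uparrow i}(s)$ and proving the corresponding identity with $Q_{j-i,n-j}^{\uparrow i}(s)$ replaced by $\tau$, then summing over $\tau$. Step 1: since in the construction of $Q_{j-i,n-j}$ every $\xi$-letter occurring in $\tau$ drags the $\bold{1}$ it creates to the front of the block $\{i+1,\dots,n\}$ it acts on, $\tau(v_{i+1}\ts\cdots\ts v_n)$ is a sum of pure tensors equal to $\bold{1}^{\ts l_v(\tau)}$ on positions $i+1,\dots,i+l_v(\tau)$ followed by vectors of $V$; hence $\bold{D}\circ\tau(v_{i+1}\ts\cdots\ts v_n)\in T^{(n-i)-l_v(\tau)}(V)$, and the identity is trivial unless $(n-i)-l_v(\tau)>0$. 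Step 2: unfolding the definition of $\mm'$, the right-hand side is $\bold{D}\big(\sum_t Q^{\ra l_v(\tau)}_{i,(n-i)-l_v(\tau)}(t)\,\tau(v_1\ts\cdots\ts v_n)\big)$, the sum extending over those $t\in\mathfrak{S}_{i,n-i}$ that lie in $\mathfrak{S}_{i,(n-i)-l_v(\tau)}^{\uparrow l_v(\tau)}$. Step 3: applying Theorem \ref{Thm:1} to the degree-$i$ tensor $v_1\ts\cdots\ts v_i$ and the degree-$((n-i)-l_v(\tau))$ tensor $\bold{D}\circ\tau(v_{i+1}\ts\cdots\ts v_n)$ from Step 1 rewrites the left-hand side as $\bold{D}\circ\sum_{t\in\mathfrak{S}_{i,(n-i)-l_v(\tau)}}Q_{i,(n-i)-l_v(\tau)}(t)\big((v_1\ts\cdots\ts v_i)\ts\bold{D}\circ\tau(v_{i+1}\ts\cdots\ts v_n)\big)$. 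Step 4: match the expressions of Steps 2 and 3, i.e.\ check that, modulo $\bold{D}$, acting by $Q_{i,(n-i)-l_v(\tau)}(t)$ on the deleted tensor equals acting by $Q^{\ra l_v(\tau)}_{i,(n-i)-l_v(\tau)}(t)\,\tau$ on $v_1\ts\cdots\ts v_n$, and that $\uparrow l_v(\tau)$ identifies the index set $\mathfrak{S}_{i,(n-i)-l_v(\tau)}$ with $\{t\in\mathfrak{S}_{i,n-i}\,:\,{\mm}'_{i,n-i}(t,\tau)\neq 0\}$ (the remaining terms on the right being zero by the definition of $\mm'$).

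Step 4 is where the argument genuinely differs from Theorem \ref{Thm:2}, and it is the step I expect to take the most care. In Theorem \ref{Thm:2} the $\bold{1}$'s produced by the inner product already occupy the leftmost positions, so the outer shift $Q^{\ra l_v(\tau)}$ — which by construction fixes a leading block — meets them exactly; here the $\bold{1}$'s produced by $\tau$ sit on positions $i+1,\dots,i+l_v(\tau)$, \emph{between} the factor $v_1\ts\cdots\ts v_i$ and the $V$-part of $\bold{D}\circ\tau(v_{i+1}\ts\cdots\ts v_n)$. The hard part will be to exploit the transparency relations (\ref{eq2}) to commute these $\bold{1}$'s to the left of $v_1,\dots,v_i$ before the outer shuffle acts, to verify that doing so leaves the image under $\bold{D}$ of the composite unchanged (here the freedom in the braid prefixes in front of the $\xi$'s, noted in the remark after the definition of $Q_{p,q}$, is used), and then to run the resulting matching together with the accompanying descent combinatorics — a shifted analogue of the combinatorics behind Lemma \ref{Lem:bij2} — exactly as in \cite{F13}. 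Once Step 4 is carried out, combining it with Steps 2 and 3 gives the claim, and with the reduction of the first paragraph it gives the theorem.
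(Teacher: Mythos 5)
Your proposal is correct and follows the same route as the paper, which in fact gives no details here beyond the single line ``Similar to the proof of Theorem \ref{Thm:2}''. Your identification of Step 4 as the only genuinely new point --- the $\bold{1}$'s created by the inner lift must be brought to the leading positions before the outer shifted shuffle $Q^{\ra l_v(\tau)}$ can act, which is exactly what the full prefixes $\s_1\cdots\s_{t_k-1}$ in front of the $\xi$'s accomplish via the transparency relations (\ref{eq2}) --- is precisely where the mirrored argument needs the extra care you describe.
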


It is easy to see from definition (or Example \ref{Ex:3} below) that $Q_{\leq I}$ and $Q_{\leq I}'$ are different, therefore there is no canonical way to define the generalized Matsumoto-Tits section.

\subsection{Discussions on the associativity of quantum quasi-shuffle products}

It is known that the quantum quasi-shuffle product is associative \cite{JRZ11}, i.e., the left hand sides of the identities in Theorem \ref{Thm:2} and Theorem \ref{Thm:2bis} coincide. 
\par
By combining the associativity of the quantum quasi-shuffle product, Theorem \ref{Thm:2} and Theorem \ref{Thm:2bis}, we obtain:

\begin{corollary}\label{Cor:1}
As elements in $\End(\widehat{V}^{\ts n})$, we have
$$\sum_{\s\in{\des}_n(\leq I)}Q_{\leq I}(\s)=\sum_{\s\in{\des}_n(\leq I)}Q_{\leq I}'(\s).$$
\end{corollary}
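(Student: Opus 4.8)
The plan is to obtain Corollary \ref{Cor:1} purely formally, as a combination of the two preceding theorems together with the known associativity of the quantum quasi-shuffle product. Concretely, Theorem \ref{Thm:2} states that the endomorphism $\bold{D}\circ\sum_{\s\in\des_n(\leq I)}Q_{\leq I}(\s)$ of $\widehat{V}^{\ts n}$ computes the left-bracketed triple product $((v_1\ts\cdots\ts v_i)\ast(v_{i+1}\ts\cdots\ts v_j))\ast(v_{j+1}\ts\cdots\ts v_n)$, while Theorem \ref{Thm:2bis} states that $\bold{D}\circ\sum_{\s\in\des_n(\leq I)}Q_{\leq I}'(\s)$ computes the right-bracketed product $(v_1\ts\cdots\ts v_i)\ast((v_{i+1}\ts\cdots\ts v_j)\ast(v_{j+1}\ts\cdots\ts v_n))$. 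Since $\ast$ is associative (\cite{JRZ11}), these two products agree for all choices of $v_1,\dots,v_n\in V$, hence
$$\bold{D}\circ\sum_{\s\in{\des}_n(\leq I)}Q_{\leq I}(\s)=\bold{D}\circ\sum_{\s\in{\des}_n(\leq I)}Q_{\leq I}'(\s)$$
as maps $V^{\ts n}\ra T(V)$. The remaining task is to upgrade this equality of compositions-with-$\bold{D}$ to an equality of the operators themselves inside $\End(\widehat{V}^{\ts n})$.

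The first step is therefore to record exactly which statement we are proving: the asserted identity is between the two elements $\sum Q_{\leq I}(\s)$ and $\sum Q_{\leq I}'(\s)$ of $\mc[\GVB_n^+]$, acting on $\widehat{V}^{\ts n}$ via the representation fixed in Section \ref{Sec:2.3} ($\s_i\mapsto\id^{\ts(i-1)}\ts\s\ts\id^{\ts(n-i-1)}$, $\xi_i\mapsto\id^{\ts(i-1)}\ts\s^m\ts\id^{\ts(n-i-1)}$). I would then invoke the structural observation, already used in the proof of Theorem \ref{Thm:2} (point (3) there), that every summand $\tau$ appearing in $Q_{\leq I}(\s)$ or $Q_{\leq I}'(\s)$ has the property that each occurrence of $\xi_k$ is immediately followed by $\s_1\cdots\s_{k-1}$, so that $\tau(v_1\ts\cdots\ts v_n)$ is a sum of pure tensors of the form $\one^{\ts l_v(\tau)}\ts w_1\ts\cdots\ts w_{n-l_v(\tau)}$ with all $w_a\in V$; in particular both $\sum Q_{\leq I}(\s)$ and $\sum Q_{\leq I}'(\s)$ send $V^{\ts n}$ into the subspace of \emph{deletable} tensors on which $\bold{D}$ is defined, and moreover the operators preserve the filtration by "number of leading $\one$'s". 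Because $\bold{D}$ restricted to deletable tensors with a \emph{prescribed} pattern of $\one$'s (here: a block of $\one$'s at the front, the rest in $V$) is injective degree-by-degree — it merely erases a known block of $\one$'s — knowing $\bold{D}\circ A=\bold{D}\circ B$ on $V^{\ts n}$ for two such operators $A,B$ forces $A=B$ on $V^{\ts n}$.

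The main obstacle is precisely this injectivity/ "reconstruction from $\bold{D}$" step, and the care it requires is in making sure that both sides land in a subspace on which $\bold{D}$ loses no information. One has to be a little careful: $\bold{D}$ is not injective on all of $T(\widehat{V})$ (it collapses $v\ts\one\ts w$ and $v\ts w\ts\one$ to the same thing), so the argument genuinely uses the special form of the summands. The cleanest way to package this is to note that, by the structural observation above, for each fixed $r$ the "degree-$r$, $r$ leading $\one$'s" component of $A(v_1\ts\cdots\ts v_n)$ is $\one^{\ts r}\ts\big(\text{the degree-}(n-r)\text{ part of }\bold{D}(A(v_1\ts\cdots\ts v_n))\big)$, and symmetrically for $B$; since $\mm_{p,q}$ and $\mm'_{p,q}$ are defined so that the value of $l_v$ is controlled and the leading $\one$'s accumulate at the front, summing over $r$ recovers $A(v_1\ts\cdots\ts v_n)=B(v_1\ts\cdots\ts v_n)$ from $\bold{D}(A(v_1\ts\cdots\ts v_n))=\bold{D}(B(v_1\ts\cdots\ts v_n))$. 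Finally, since $V$ was chosen as a linear complement of $\mc.\one$ in $\widehat{V}$, the pure tensors $v_1\ts\cdots\ts v_n$ with $v_a\in V\cup\{\one\}$ span $\widehat{V}^{\ts n}$; combined with the $\mc$-bilinearity of everything in sight — and the normalisation relations \eqref{eq2} governing how $\s$, $\s^m$ act when some $v_a=\one$ — the equality on $V^{\ts n}$ extends to all of $\widehat{V}^{\ts n}$, giving the claimed identity in $\End(\widehat{V}^{\ts n})$. I expect that the bilinearity/extension step and the formal chaining through the two theorems are routine; essentially all of the content is the observation that the particular operators produced by $Q_{\leq I}$ and $Q_{\leq I}'$ are "$\bold{D}$-detectable".
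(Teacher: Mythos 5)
Your overall route is the paper's own: Corollary \ref{Cor:1} is obtained by combining Theorem \ref{Thm:2}, Theorem \ref{Thm:2bis} and the associativity of $\ast$, and the paper records nothing beyond that one sentence. Your elaboration of the first implicit step — cancelling $\bold{D}$ — is correct and worth writing down: writing $A:=\sum_{\s}Q_{\leq I}(\s)$ and $B:=\sum_{\s}Q_{\leq I}'(\s)$, every summand $\tau$ of either sends $V^{\ts n}$ into $\one^{\ts l_v(\tau)}\ts V^{\ts (n-l_v(\tau))}$ (the blocks $\s_1\cdots\s_{t-1}\xi_t$ push each $\one$ to the front, and $m$, $\s$ restrict to $V$), the images for different values of $l_v$ land in different graded pieces of $T(V)$ after deletion, and $\bold{D}$ is injective on $\bigoplus_r \one^{\ts r}\ts V^{\ts(n-r)}$; so $\bold{D}\circ A=\bold{D}\circ B$ on $V^{\ts n}$ does force $A=B$ on $V^{\ts n}$.

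The gap is your final step. Agreement of two such operators on $V^{\ts n}$ does not propagate to $\widehat{V}^{\ts n}$ by ``bilinearity plus the relations \eqref{eq2}'': those relations describe how each generator acts on tensors containing $\one$, but they do not let you deduce the value of $A-B$ on such tensors from its vanishing on $V^{\ts n}$. Concretely, if $m|_{V\ts V}=0$ then $\xi_1$ annihilates $V^{\ts 2}$ while $\xi_1(\one\ts v)=\one\ts v\neq 0$, so an operator identity verified only on $V^{\ts n}$ carries no information in the $\one$-directions; and indeed in Example \ref{Ex:3} the residual discrepancy $\s_2\xi_1+\s_1\xi_2\xi_1$ versus $\s_1\xi_2\s_1\s_2+\s_1\xi_2\s_1\xi_2$ vanishes identically on $V^{\ts 3}$ when $m|_{V\ts V}=0$, yet proving the two sides agree on all of $\widehat{V}^{\ts 3}$ requires the relations $\s^2(\one\ts v)=\one\ts v$ and $\s^m(v\ts\one)=\s(v\ts\one)$, which are invisible on $V^{\ts 3}$. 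Since the corollary is asserted in $\End(\widehat{V}^{\ts n})$, this step needs an actual argument — for instance a reduction of tensors with $k$ entries equal to $\one$ to the corresponding statement on $\widehat{V}^{\ts(n-k)}$, or a direct verification from the $\GVB_n$-relations together with \eqref{eq2} in the spirit of Example \ref{Ex:3}. To be fair, the paper's one-line proof is silent on exactly the same point; but as written your proposal does not close it either, and it is the only part of your argument that is not routine.
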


Together with Theorem \ref{Thm:1}, we have justified the following observation : the existence of the shuffle product (quantum shuffle product, quantum quasi-shuffle product) relies on the lifting of shuffles (elements with one descent) to a proper braid-type group. When this product is required to be associative, we should study the different liftings of elements with at most two descents (in fact there are two different ways and the associativity is equivalent to the coincidence of their actions on the tensor space).
\par
The identity in Corollary \ref{Cor:1} does not hold in $\mc[\GVB_n^+]$.

\begin{example}\label{Ex:3}
Assume that $n=3$ and $I=\{1,2\}$. The left hand side of the formula in Corollary \ref{Cor:1} is:
$$(e+\s_2+\s_1\xi_2+\s_1\s_2+\xi_1\s_2)(e+\s_1)+\xi_1+\s_2\xi_1+\s_1\xi_2\xi_1,$$
while the right hand side is:
$$(e+\xi_1+\s_1+\s_1\xi_2\s_1+\s_2\s_1)(e+\s_2)+\s_1\xi_2+\s_2\s_1\xi_2+\s_1\xi_2\s_1\xi_2.$$

Notice that they are not the same as elements in $\mc[\GVB_3^+]$, but we will show that their images in $\End(\widehat{V}^{\ts 3})$ coincide. It suffices to verify that in $\End(V^{\ts 3})$ we have 
$$\s_2\xi_1=\s_1\xi_2\s_1\s_2\ \ \text{and}\ \  \s_1\xi_2\xi_1=\s_1\xi_2\s_1\xi_2:$$

\begin{enumerate}
\item for the first identity, $\s_1\xi_2\s_1\s_2=\s_1\s_1\s_2\xi_1=\s_2\xi_1$, where the first equality is the braid-type relation in $\GVB_3$ and the second one holds since for any $v\in \widehat{V}$, $\s^2(\one\ts v)=\one\ts v$ in $\End(\widehat{V}^{\ts 2})$.
\item for the second equality, since $\s^m(\one\ts\one)=\one\ts \one$ and $\s^m(v\ts \one)=\s(v\ts\one)$ for any $v\in \widehat{V}$, we have in $\End(\widehat{V}^{\ts 3})$ the following computations:
$$\s_1\xi_2\xi_1=\s_1\xi_1\xi_2\xi_1=\s_1\xi_2\xi_1\xi_2=\s_1\xi_2\s_1\xi_2.$$
\end{enumerate}
\end{example}

In fact, the two identities above are all obstructions for two sides in Corollary \ref{Cor:1} of being the same in $\mc[\GVB_n^+]$. We explain it in the following remark.

\begin{remark}
Let $W$ be a vector space with a specified element $\bold{1}$. Given a braiding $\s:W\ts W\ra W\ts W$ and an associative operation $m:W\ts W\ra W$ satisfying (\ref{eq2}), we can define a quantum quasi-shuffle operation on $T(W)$ by lifting all shuffles via $Q_{p,q}$ as in Theorem \ref{Thm:1}.
\par
There exist equivalences between:
\begin{enumerate}
\item The quasi-shuffle operation obtained via lifting shuffles is associative;
\item The assignment 
$${\GVB}_3^+\ra\End(W^{\ts 3}),$$
$$\s_1\mapsto \s\ts\id,\ \  \s_2\mapsto \id\ts\s,\ \  \xi_1\mapsto\s^m\ts\id,\ \  \xi_2\mapsto\id\ts\s^m$$ 
defines a representation of $\GVB_3^+$ on $W^{\ts 3}$;
\item $\s$ and $m$ are compatible such that $(W,\s,m)$ is a braided algebra.
\end{enumerate}
This result is a consequence of \cite{JRZ11}, Theorem 4 and \cite{F13}, Proposition 4.6. A direct proof can be executed by establishing a recursive formula using the decomposition 
$$\mathfrak{S}_{p,q}=\mathfrak{S}_{p-1,q}^{\uparrow 1}\cup \mathfrak{S}_{p,q-1}^{\uparrow 1}s_1\cdots s_p.$$
\end{remark}

\section{Lift permutations of arbitrary descents}

We will generalize in this section the construction above to permutations with arbitrary descents. As an application, we will define the total symmetrization operator.

\subsection{Construction}

A $k$-tuple $\underline{c}=(c_1,\cdots,c_k)\in(\mathbb{N}_{>0})^k$ is called a \emph{composition} of $n$ if $c_1+\cdots+c_k=n$; $k$ is called the \emph{length} of the composition and $n$ is its \emph{weight}. Let $C(n)$ denote the set of compositions of $n$.
\par
Let $S(n)$ denote the set of subsets of $\{1,\cdots,n\}$. There exists a bijection
$$\vp:C(n)\ra S(n-1),\ \ (c_1,\cdots,c_k)\mapsto (c_1,c_1+c_2,\cdots,c_1+\cdots+c_{k-1});$$
its inverse is given by: for $\{i_1,\cdots,i_k\}\in S(n-1)$ with $i_1<\cdots<i_k$, its pre-image is 
$$(i_1,i_2-i_1,i_3-i_2,\cdots,i_k-i_{k-1},n-i_k)\in C(n).$$
\par
We fix a subset $I\subset\{1,\cdots,n-1\}$ with $|I|=k$, which will be the descent set of the permutations. We denote $\underline{c}=(c_1,\cdots,c_{k+1})\in C(n)$ such that $\vp(\underline{c})=I$.

\begin{lemma}\label{Lem:decomp}
We have the following bijection whose inverse is given by the multiplication:
$${\des}_n(\leq I)\overset{\sim}{\longrightarrow}\mathfrak{S}_{c_1+\cdots+c_k,c_{k+1}}\times\cdots\times\mathfrak{S}_{c_1+c_2,c_3}\times \mathfrak{S}_{c_1,c_2}.$$
\end{lemma}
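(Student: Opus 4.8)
The plan is to prove the statement by induction on $k=|I|$, the number of descents, reducing at each step to the one-descent (shuffle) case. The base cases $k=0$ (where $\des_n(\leq\emptyset)=\{e\}$ maps to the empty product) and $k=1$ (which is the identity $\{e\}\cup\mathfrak{S}_{i,n-i}=\mathfrak{S}_{i,n-i}$ recalled in the Examples) are immediate. For the inductive step, I would peel off the \emph{last} factor $\mathfrak{S}_{c_1,c_2}$. Write $I'=I\setminus\{c_1\}$ shifted down appropriately, or rather work with the smaller descent set $I\cap\{1,\dots,c_1+c_2-1\}$-type data; the cleanest bookkeeping is to split a permutation $\s\in\des_n(\leq I)$ according to whether and how it acts across the position $c_1$.

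\textbf{Key steps.} First I would make precise the ``multiplication'' map in the statement: given $(\s_{k},\dots,\s_1)$ in the product on the right, form $\s_k\s_{k-1}\cdots\s_1\in\mathfrak{S}_n$, where each $\s_m\in\mathfrak{S}_{c_1+\cdots+c_m,\,c_{m+1}}$ (so $\s_m$ has at most its unique descent at position $c_1+\cdots+c_m$). The core lemma to establish is that this product is \emph{reduced}, i.e. $l(\s_k\cdots\s_1)=\sum_m l(\s_m)$, and that its descent set is contained in $\{c_1,c_1+c_2,\dots,c_1+\cdots+c_k\}=I$; this uses that the factors have descents at strictly increasing positions, exactly as in the associativity argument for the quantum shuffle product given after Lemma~\ref{Lem:length}. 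Conversely, given $\s\in\des_n(\leq I)$, I would recover the factors by a greedy/descent-peeling procedure: let $\s_1$ be the unique $(c_1,c_2)$-shuffle that ``straightens'' the block structure relative to position $c_1$ within the first $c_1+c_2$ values — concretely, $\s_1$ is determined by the relative order that $\s$ imposes on the preimages of $\{1,\dots,c_1\}$ versus $\{c_1+1,\dots,c_1+c_2\}$ — and then show $\s\s_1^{-1}\in\des_n(\leq I\setminus\{c_1\})$ viewed inside $\mathfrak{S}_n$ but with the first two blocks merged. Apply the induction hypothesis to $\s\s_1^{-1}$ with the composition $(c_1+c_2,c_3,\dots,c_{k+1})$. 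Finally, check that the two constructions are mutually inverse and that length adds up, so the bijection is genuine and its inverse really is multiplication.

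\textbf{Main obstacle.} The delicate point is the descent-peeling step: showing that the map $\s\mapsto(\s_1,\ \s\s_1^{-1})$ is well-defined with $\s\s_1^{-1}\in\des_n(\leq I\setminus\{c_1\})$ and $l(\s)=l(\s_1)+l(\s\s_1^{-1})$, and that $\s_1$ is the \emph{unique} element of $\mathfrak{S}_{c_1,c_2}$ with this property. This is a statement about how a single descent at position $c_1$ interacts with descents further to the right; I expect it to follow from the standard characterization of $(p,q)$-shuffles as minimal-length coset representatives for $\mathfrak{S}_p\times\mathfrak{S}_q\backslash\mathfrak{S}_{p+q}$ together with the fact that $\s$ has \emph{no} descent in positions $\{1,\dots,c_1-1\}\cup\{c_1+1,\dots,c_1+c_2-1\}$ (when $|I\cap\{1,\dots,c_1+c_2-1\}|\le 1$), but one must handle carefully the positions between $c_1+c_2$ and the next element of $I$ where $\s\s_1^{-1}$ could in principle acquire a spurious descent. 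Once this local lemma is in hand, Lemma~\ref{Lem:bij2} is the special case $k=2$, and the general statement follows by assembling the one-descent bijection $k$ times.
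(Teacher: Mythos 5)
Your plan is correct in outline, but it proves the lemma by a genuinely different route than the paper. The paper's proof is a short counting argument: it checks that the multiplication map $(\s^k,\dots,\s^1)\mapsto\s^k\cdots\s^1$ lands in ${\des}_n(\leq I)$ (each factor contributes at most one descent, at a position belonging to $I$), observes injectivity, and then matches cardinalities --- the product has $n!/(c_1!\cdots c_{k+1}!)$ elements, and the standard enumeration of permutations with descent set contained in $I$ (cited from Mikl\'os, Lemma 1.3) gives the same count for ${\des}_n(\leq I)$. Your approach instead constructs the inverse explicitly by induction on $k$, which is essentially the iterated parabolic coset factorization for the chain $\mathfrak{S}_{c_1}\times\cdots\times\mathfrak{S}_{c_{k+1}}\subset\cdots\subset\mathfrak{S}_n$. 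This is more work but buys more: it is self-contained (no appeal to the enumeration by descent sets) and it yields the additivity of lengths $l(\s^k\cdots\s^1)=\sum_m l(\s^m)$ as a byproduct, a fact the paper uses anyway in the associativity argument for the quantum shuffle product and implicitly in Lemma~\ref{Lem:red}. Two remarks on your ``main obstacle.'' First, the peeling is technically smoother if you extract the \emph{leftmost} factor first, i.e.\ write $\s=\s^k\cdot\tau$ where $\s^k\in\mathfrak{S}_{c_1+\cdots+c_k,c_{k+1}}$ is the minimal-length representative of the coset $\s\,(\mathfrak{S}_{c_1+\cdots+c_k}\times\mathfrak{S}_{c_{k+1}})$: then $\tau$ automatically lies in the subgroup $\mathfrak{S}_{c_1+\cdots+c_k}$ (the $\mathfrak{S}_{c_{k+1}}$-component must be trivial since $\s$ has no descents beyond position $c_1+\cdots+c_k$ except possibly there), and the induction takes place literally inside a smaller symmetric group with composition $(c_1,\dots,c_k)$. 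Peeling the rightmost factor $\s_1\in\mathfrak{S}_{c_1,c_2}$, as you propose, forces you to verify that $\s\s_1^{-1}$ acquires no descent at $c_1$ and keeps its remaining descents inside $I\setminus\{c_1\}$; this is true but is exactly the delicate point you flag, and it is left unproved in your plan. Second, once either version of the peeling lemma is established, your observation that Lemma~\ref{Lem:bij2} is the case $k=2$ matches the paper, which likewise derives Lemma~\ref{Lem:bij2} as a special case of this lemma.
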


\begin{proof}
Since elements in $\mathfrak{S}_{c_1+\cdots+c_l,c_{l+1}}$ for any $l=1,\cdots,k$ have only one descent at the position $c_1+\cdots+c_l$, permutations on the right hand side have descents contained in $I$, therefore the map is well-defined.
\par
The injectivity is clear by definition. To show the bijection, we compare the cardinality: the right hand side has cardinality 
$$\frac{n!}{c_1!\cdots c_{k+1}!}.$$
The counting for the left hand side is standard: see for example \cite{Miklos}, Lemma 1.3 and take account of the definition of the inverse of $\vp$. Therefore the map is bijective.
\end{proof}

For $\s\in{\des}_n(I)$, let $(\s^k,\s^{k-1},\cdots,\s^2,\s^1)$ denote its image in $\mathfrak{S}_{c_1+\cdots+c_k,c_{k+1}}\times\cdots\times\mathfrak{S}_{c_1+c_2,c_3}\times \mathfrak{S}_{c_1,c_2}$. We define a map $Q_I:\des_n(\leq I)\ra\mc[\GVB_n^+]$ by:
$$Q_I(\s)={\mm}_{c_1+\cdots+c_k,c_{k+1}}(\s^k,{\mm}_{c_1+\cdots+c_{k-1},c_k}(\s^{k-1},\cdots,{\mm}_{c_1,c_2}(\s^1))\cdots)$$
which is the iterated action of $\mm_{p,q}$ according to the decomposition in Lemma \ref{Lem:decomp}, as in the case of two descents.
\par
We write in the following theorem $v_{i_1}\cdots v_{i_t}:=(v_{i_1}\ts\cdots\ts v_{i_t})$ for $v_{i_1},\cdots,v_{i_t}\in V$.

\begin{theorem}\label{Thm:main2}
For any $v_1,\cdots,v_n\in V$, we have:
$$((\cdots((v_1\cdots v_{c_1}\ast v_{c_1+1}\cdots v_{c_1+c_2})\ast v_{c_1+c_2+1}\cdots v_{c_1+c_2+c_3})\ast\cdots)\ast v_{c_1+\cdots+c_k+1}\cdots v_n)=$$
$$=\bold{D}\circ\sum_{\s\in{\des}_n(\leq I)}Q_I(\s)(v_1\ts\cdots \ts v_n),$$
where the tensor product on the right hand side is looked in $\widehat{V}^{\ts n}$.
\end{theorem}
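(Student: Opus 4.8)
The plan is to prove Theorem~\ref{Thm:main2} by induction on $k=|I|$, reducing each step to the two-descent case which is essentially Theorem~\ref{Thm:2}. The base cases $k=0$ and $k=1$ are Theorem~\ref{Thm:1} (together with the trivial lift of $e$), and $k=1$ is literally the content of $Q_{p,q}$ encoding the quantum quasi-shuffle product; indeed $\des_n(\leq\{c_1\})=\mathfrak{S}_{c_1,c_2}$. So assume the statement holds for all descent sets of size $k-1$, and let $I=\{i_1<\cdots<i_k\}$ with $\underline c=\vp^{-1}(I)$. Write $I'=\{i_1,\cdots,i_{k-1}\}=\vp(\underline c')$ where $\underline c'=(c_1,\cdots,c_{k-1},c_k+c_{k+1})$ is the composition obtained by merging the last two blocks, and set $j=i_k=c_1+\cdots+c_k$. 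The left-hand side of the theorem is $A\ast(v_{j+1}\cdots v_n)$ where $A$ is the $(k-1)$-fold iterated product on $v_1,\cdots,v_j$; by the inductive hypothesis applied to $\des_j(\leq I')$ inside $\widehat V^{\ts j}$, we have $A=\bold D\circ\sum_{s\in\des_j(\leq I')}Q_{I'}(s)(v_1\ts\cdots\ts v_j)$.

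The heart of the argument is then to push the final product $\ast(v_{j+1}\cdots v_n)$ through this sum, exactly as in the proof of Theorem~\ref{Thm:2}. First I would observe that it suffices to fix a single $s\in\des_j(\leq I')$ and a single summand $\tau\in\GVB_j^+$ of $Q_{I'}(s)$, and show
$$\bold D\big((\bold D\circ\tau(v_1\ts\cdots\ts v_j))\ast(v_{j+1}\cdots v_n)\big)=\bold D\Big(\sum_{t\in\mathfrak S_{j,n-j}}\bold M_{j,n-j}(t,\tau)(v_1\ts\cdots\ts v_n)\Big),$$
then sum over $\tau$ and $s$. The key structural fact, carried over verbatim from step (3) of the proof of Theorem~\ref{Thm:2}, is that every summand $\tau$ of $Q_{I'}(s)$ has the property that each occurrence of $\xi_m$ is immediately followed by $\s_1\cdots\s_{m-1}$, so $\tau(v_1\ts\cdots\ts v_j)$ is a sum of pure tensors of the form $\one^{\ts l_v(\tau)}\ts w_1\ts\cdots\ts w_{j-l_v(\tau)}$ with $w_r\in V$; hence $\bold D\circ\tau(v_1\ts\cdots\ts v_j)\in T^{j-l_v(\tau)}(V)$. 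This is the only place the explicit shape of $Q_{I'}$ matters, and it is inherited automatically by the iterated definition of $Q_{I'}$ through $\bold M_{p,q}$, since each factor $\bigl(\s_{t_k}+(1-\delta_{t_k+1,t_{k+1}})\s_1\cdots\s_{t_k-1}\xi_{t_k}\bigr)\s_{t_k+1}\cdots\s_k$ contributes a $\xi$ only in the form $\s_1\cdots\s_{t_k-1}\xi_{t_k}\s_{t_k+1}\cdots\s_k$. Given this, Theorem~\ref{Thm:1} applied in degree $(j-l_v(\tau),n-j)$ gives
$$(\bold D\circ\tau(v_1\ts\cdots\ts v_j))\ast(v_{j+1}\cdots v_n)=\sum_{t\in\mathfrak S_{j-l_v(\tau),n-j}}Q_{j-l_v(\tau),n-j}(t)\big((\bold D\circ\tau(v_1\ts\cdots\ts v_j))\ts v_{j+1}\ts\cdots\ts v_n\big),$$
and re-inserting the deleted $\one$'s identifies the right side with $\bold D\big(\sum_{t\in\mathfrak S_{j,n-j}}Q_{j-l_v(\tau),n-j}^{\ra l_v(\tau)}(t)\,\tau(v_1\ts\cdots\ts v_n)\big)=\bold D\big(\sum_{t}\bold M_{j,n-j}(t,\tau)(v_1\ts\cdots\ts v_n)\big)$, using that $\bold D$ commutes with the action of operators supported on positions $>l_v(\tau)$ relative to a deletable tensor whose $\one$'s sit in the first $l_v(\tau)$ slots.

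It remains to match the resulting double sum with $\bold D\circ\sum_{\s\in\des_n(\leq I)}Q_I(\s)(v_1\ts\cdots\ts v_n)$. By definition, $\sum_{s\in\des_j(\leq I')}\sum_{t\in\mathfrak S_{j,n-j}}\bold M_{j,n-j}(t,Q_{I'}(s))=\sum_{s,t}\bold M_{j,n-j}(t,\bold M_{c_1+\cdots+c_{k-1},c_k+c_{k+1}}(s^{k-1},\cdots))$, and by Lemma~\ref{Lem:decomp} applied twice — once to $\des_j(\leq I')\cong\prod_{l=1}^{k-1}\mathfrak S_{c_1+\cdots+c_l,c_{l+1}}$ with the last factor being $\mathfrak S_{c_1+\cdots+c_{k-1},c_k+c_{k+1}}$, and once to $\des_n(\leq I)\cong\mathfrak S_{j,n-j}\times\prod_{l=1}^{k-1}\mathfrak S_{c_1+\cdots+c_l,c_{l+1}}$ — the index set $\{(s,t)\}$ is in bijection with $\des_n(\leq I)$ in a way compatible with the iterated definition of $Q_I$, so the double sum is exactly $\sum_{\s\in\des_n(\leq I)}Q_I(\s)$ and we are done. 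The main obstacle I anticipate is bookkeeping, not mathematics: making the compatibility of the two applications of Lemma~\ref{Lem:decomp} with the nested $\bold M_{p,q}$'s precise — i.e. checking that, after the reduction, $\bold M_{j,n-j}(t,\tau)$ summed over the summands $\tau$ of $Q_{I'}(s)$ really reproduces $Q_I$ of the permutation corresponding to $(s,t)$, including the $(1-\delta_{t_k+1,t_{k+1}})$ correction terms, which depend only on the bubble decomposition within each $\mathfrak S_{c_1+\cdots+c_l,c_{l+1}}$ factor and are therefore unchanged by the merge. Once this combinatorial dictionary is set up carefully, the analytic content is entirely contained in Theorem~\ref{Thm:1} and the $\xi$-shape observation above.
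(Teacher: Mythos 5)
Your proposal is correct and follows essentially the same route as the paper, whose entire proof is the one-line instruction ``Apply Theorem~\ref{Thm:2} repeatedly'': your induction on $|I|$, which at each step re-runs the argument of the proof of Theorem~\ref{Thm:2} (the $\xi$-shape observation, Theorem~\ref{Thm:1} in degree $(j-l_v(\tau),n-j)$, and the bijection of Lemma~\ref{Lem:decomp}) with $Q_{I'}$ in place of $Q_{i,j-i}$, is exactly a fleshed-out version of that instruction. The points you flag as bookkeeping are indeed the only content the paper leaves implicit, and you have identified them correctly.
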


\begin{proof}
Apply Theorem \ref{Thm:2} repeatedly.
\end{proof}

By introducing the notation 
$$D_{\leq I}=\sum_{\s\in{\des}_n(\leq I)}\s,$$
the right hand side can be written as
$$\bold{D}\circ Q_I(D_{\leq I})(v_1\ts\cdots\ts v_n).$$
The decomposition of $\des_n(\leq I)$ into $k$ shuffles in Lemma \ref{Lem:decomp} is not unique: for example, there exists a bijection
$${\des}_n(\leq I)\overset{\sim}{\longrightarrow}\mathfrak{S}_{c_1,c_2+\cdots+c_{k+1}}\times\cdots\times\mathfrak{S}_{c_{k-1},c_k+c_{k+1}}^{\uparrow c_1+\cdots+c_{k-2}}\times \mathfrak{S}_{c_k,c_{k+1}}^{\uparrow c_1+\cdots+c_{k-1}}.$$
The associativity of the quantum quasi-shuffle product guarantees that the image of the lifted element in $\End(\widehat{V}^{\ts n})$ is independent of the choice of the decomposition of ${\des}_n(\leq I)$ into $k$ shuffles. 
\par
For any decomposition of ${\des}_n(\leq I)$ into $k$ shuffles, there exists a lifting of elements in ${\des}_n(I)$ to $\mc[\GVB_n^+]$ as above and an analog of Theorem \ref{Thm:main2}.

\begin{corollary}\label{Cor:same}
Let $Q_I$ and $Q_I'$ be two different liftings of ${\des}_n(\leq I)$ according to the two different decompositions of ${\des}_n(\leq I)$ into $k$ shuffles. Then the images of $Q_I(D_{\leq I})$ and $Q_I'(D_{\leq I})$ in $\End(\widehat{V}^{\ts n})$ coincide.
\end{corollary}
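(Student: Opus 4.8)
The plan is to reduce Corollary~\ref{Cor:same} to the two-descent case already settled in Corollary~\ref{Cor:1}, using the associativity of the quantum quasi-shuffle product as a black box and the fact that any two decompositions of ${\des}_n(\leq I)$ into $k$ shuffles are connected by a sequence of ``elementary moves'' that alter the bracketing at only one spot. First I would record the following: a decomposition of ${\des}_n(\leq I)$ into $k$ shuffles, in the sense used here, corresponds to a way of fully parenthesizing the ordered product of the $k+1$ blocks of sizes $c_1,\dots,c_{k+1}$; Lemma~\ref{Lem:decomp} is the left-comb bracketing, the displayed variant is the right-comb bracketing, and in general each bracketing $B$ yields, by iterating $\mm_{p,q}$ along the corresponding binary tree exactly as in the definition of $Q_I$, a lifting $Q_I^{B}:{\des}_n(\leq I)\ra\mc[\GVB_n^+]$ together with an analog of Theorem~\ref{Thm:main2}:
$$\pi_B(v_1\ts\cdots\ts v_n)=\bold{D}\circ Q_I^{B}(D_{\leq I})(v_1\ts\cdots\ts v_n),$$
where $\pi_B$ denotes the iterated $\ast$-product of the $k+1$ blocks performed according to the bracketing $B$. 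This last identity is proved, exactly as Theorem~\ref{Thm:main2} is deduced from Theorem~\ref{Thm:2}, by repeated application of the two-descent result; the only thing to check is that each internal node of the tree $B$ is an instance of the situation of Theorem~\ref{Thm:2} (or its mirror Theorem~\ref{Thm:2bis}) applied to the three objects ``left subtree output, middle block(s), right subtree output'', which is immediate from how $\mm_{p,q}$ was set up.

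Next I would invoke associativity of $\ast$ (from \cite{JRZ11}): since $\ast$ is associative, the iterated product $\pi_B(v_1\ts\cdots\ts v_n)$ does not depend on the bracketing $B$; it equals $v_1\cdots v_{c_1}\ast v_{c_1+1}\cdots v_{c_1+c_2}\ast\cdots\ast v_{c_1+\cdots+c_k+1}\cdots v_n$ for every $B$. Combining this with the displayed identity above for two bracketings $B$ and $B'$ gives
$$\bold{D}\circ Q_I^{B}(D_{\leq I})(v_1\ts\cdots\ts v_n)=\bold{D}\circ Q_I^{B'}(D_{\leq I})(v_1\ts\cdots\ts v_n)$$
for all $v_1,\dots,v_n\in V$, which is precisely the assertion that $Q_I(D_{\leq I})$ and $Q_I'(D_{\leq I})$ have the same image in $\End(\widehat V^{\ts n})$ once we take $B,B'$ to be the two given decompositions. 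The statement of the corollary speaks of ``two different decompositions into $k$ shuffles,'' and each such decomposition is exactly a bracketing $B$ as above, so this already closes the argument; alternatively one can present it more combinatorially by noting that any two bracketings are linked by a chain of single associativity moves, each of which is governed by a local three-block rearrangement and hence by Corollary~\ref{Cor:1} applied on the relevant consecutive positions (using that $\mm_{p,q}$ and the $\uparrow k$ shifts are compatible with acting on a sub-range of tensorands, so a move deep inside the tree is the tensor identity $\mathrm{id}$ on the untouched factors tensored with the two-descent identity on the touched range).

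The main obstacle is the bookkeeping in the last sentence: one must make sure that applying Corollary~\ref{Cor:1} ``locally,'' i.e. on the positions $a+1,\dots,b$ corresponding to an internal node of the tree while freezing everything outside, is legitimate at the level of $\mc[\GVB_n^+]$-actions on $\widehat V^{\ts n}$. This requires checking that the local lifting built from $\mm_{p,q}$ inside a subtree is the $\uparrow$-shift of the corresponding lifting in the smaller $\GVB$, and that the deleting operator $\bold{D}$ interacts correctly with this shift — both are routine from the definitions of $i_k$, $\rho^{\uparrow k}$, $\mm_{p,q}$ and $\bold{D}$, but they are the step where care is needed. Once this compatibility is in place, the corollary follows formally, and in fact the cleanest write-up avoids the chain-of-moves picture entirely and simply applies Theorem~\ref{Thm:main2} in the form $\pi_B=\bold{D}\circ Q_I^{B}(D_{\leq I})$ for the two decompositions together with associativity of $\ast$, as in the first two paragraphs above.
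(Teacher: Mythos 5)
Your argument is correct and matches the paper's: the paper likewise treats each decomposition of ${\des}_n(\leq I)$ into $k$ shuffles as a bracketing that yields its own lifting together with an analog of Theorem~\ref{Thm:main2}, and then deduces the corollary from the associativity of $\ast$; the chain-of-elementary-moves variant you sketch is not needed, as you yourself note. The only (shared) imprecision is that this route literally gives equality of $\bold{D}\circ Q_I(D_{\leq I})$ and $\bold{D}\circ Q_I'(D_{\leq I})$ on pure tensors from $V^{\ts n}$, which is how the statement ``the images in $\End(\widehat{V}^{\ts n})$ coincide'' is to be read here, exactly as in Corollary~\ref{Cor:1}.
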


\subsection{Application to the total symmetrization operator}

We consider the special case $I=\{1,\cdots,n-1\}$ in the above discussion. By definition, $\vp(\underline{c})=I$ where $\underline{c}=(1,1,\cdots,1)\in C(n)$. It is clear that $\des_n(\leq I)=\mathfrak{S}_n$. 

\begin{definition}
The map $Q_I:\mathfrak{S}_n\ra\mc[\GVB_n^+]$ is called a generalized Matsumoto-Tits section, we will denote its linear extension by $Q:\mc[\mathfrak{S}_n]\ra\mc[\GVB_n^+]$.
\end{definition}

As in the case of two descents, there is no canonical way to define the generalized Matsumoto-Tits section.
\par
By Theorem \ref{Thm:main2} and Corollary \ref{Cor:same}, for any $v_1,\cdots,v_n\in V$,
$$v_1\ast\cdots\ast v_n=\bold{D}\circ Q\left(\sum_{\s\in\mathfrak{S}_n}\s\right)(v_1\ts\cdots\ts v_n).$$

\begin{definition}
\begin{enumerate}
\item The element $Q_n:=\sum_{\s\in\mathfrak{S}_n}Q(\s)$ is called a total symmetrization element in $\mc[\GVB_n^+]$.
\item The element $\QS_n:=\bold{D}\circ Q_n\in\Hom(V^{\ts n},T^{\leq n}(V))$ is called the total symmetrization operator of degree $n$ and the sum $\QS=\sum_{n\geq 0}\QS_n\in\End(T(V))$ is called the total symmetrization operator.
\end{enumerate}
\end{definition}

By definition (\cite{JRZ11}, \cite{JR12}), the quantum quasi-shuffle algebra $Q_{\s,m}(V)$ is the sub-algebra of $T_{\s,m}(V)$ generated by $V$.

\begin{corollary}
We have $Q_{\s,m}(V)=\im \QS$.
\end{corollary}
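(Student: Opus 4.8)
The plan is to prove the two inclusions $\im\QS\subseteq Q_{\s,m}(V)$ and $Q_{\s,m}(V)\subseteq\im\QS$ separately, using the preceding results.

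\textbf{The inclusion $\im\QS\subseteq Q_{\s,m}(V)$.} First I would observe that $\QS=\sum_{n\geq 0}\QS_n$ and that for each $n$ the operator $\QS_n=\bold{D}\circ Q_n$ sends a pure tensor $v_1\ts\cdots\ts v_n$ (with all $v_i\in V$) to $\bold{D}\circ Q(\sum_{\s\in\mathfrak{S}_n}\s)(v_1\ts\cdots\ts v_n)$. By the identity displayed just before the first definition of $Q_n$ (an instance of Theorem \ref{Thm:main2} with $I=\{1,\cdots,n-1\}$ together with Corollary \ref{Cor:same}), this equals $v_1\ast\cdots\ast v_n$. Hence $\QS_n(V^{\ts n})$ is exactly the span of all $n$-fold $\ast$-products of elements of $V$, which lies inside the sub-algebra of $T_{\s,m}(V)$ generated by $V$, namely $Q_{\s,m}(V)$. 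Taking the sum over $n$ (and noting $\QS_0$ maps into the degree-zero part $\mc.\one$, which is also in $Q_{\s,m}(V)$ as the unit) gives $\im\QS\subseteq Q_{\s,m}(V)$.

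\textbf{The inclusion $Q_{\s,m}(V)\subseteq\im\QS$.} For the reverse inclusion, I would argue that $Q_{\s,m}(V)$ is spanned, as a vector space, by the unit $\one$ together with all iterated products $v_1\ast v_2\ast\cdots\ast v_n$ with $n\geq 1$ and $v_i\in V$; this is precisely the meaning of ``sub-algebra generated by $V$'' once one uses that $\ast$ is associative \cite{JRZ11} so that no bracketing needs to be specified, and that $V$ together with $1$ generates. Each such generator $v_1\ast\cdots\ast v_n$ equals $\QS_n(v_1\ts\cdots\ts v_n)\in\im\QS_n\subseteq\im\QS$ by the same displayed identity, and $\one=\QS_0(\one)\in\im\QS$. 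Since $\im\QS$ is a linear subspace of $T(V)$, it contains the span of all these generators, i.e. $Q_{\s,m}(V)\subseteq\im\QS$.

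\textbf{Main obstacle.} The only genuinely non-formal point is making sure the ``generated by $V$'' description of $Q_{\s,m}(V)$ matches exactly the set of elements $v_1\ast\cdots\ast v_n$: one must invoke associativity of $\ast$ so that an arbitrary parenthesization of a product of elements of $V$ is one of the fully left-nested products appearing on the left side of Theorem \ref{Thm:main2}, and one must check that products involving the specified element $\one$ contribute nothing new (by \eqref{eq2}, $\one$ acts as a unit for $\ast$, so $\one$ can be absorbed). I would also remark that $\bold{D}$ is well-defined on the relevant tensors because, as shown in the proof of Theorem \ref{Thm:2}, every summand $\tau$ of $Q(\s)$ produces only deletable pure tensors when applied to $v_1\ts\cdots\ts v_n$ with $v_i\in V$. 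Once these bookkeeping points are settled, the corollary follows immediately from Theorem \ref{Thm:main2} and Corollary \ref{Cor:same}; everything else is a routine two-inclusion argument.
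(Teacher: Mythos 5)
Your proof is correct and is exactly the argument the paper intends: the displayed identity $v_1\ast\cdots\ast v_n=\QS_n(v_1\ts\cdots\ts v_n)$ (from Theorem \ref{Thm:main2} and Corollary \ref{Cor:same}) shows that $\im\QS$ coincides with the span of all iterated $\ast$-products of elements of $V$, which by associativity is the sub-algebra generated by $V$. The two-inclusion write-up and the bookkeeping remarks about associativity and the unit are fine; nothing is missing.
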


Another direct definition of the total symmetrization operator is given by Jian \cite{J13} by generalizing the construction of Guo and Keigher (\cite{GK00}), compared to their results, our approach makes the combinatorial aspects of the quantum quasi-shuffle product (quasi-shuffle product, quantum shuffle product) more transparent and can be applied not only to the extremal cases (i.e., $|I|=1$ and $|I|=n-1$) as in \cite{J13} but also to an arbitrary descent. These generalized Matsumoto-Tits sections deserve further studies.

The total symmetrization operator allows us to define the defining ideal in an explicit way.

\begin{definition}
The ideal $I_{\s,m}(V):=\ker \QS\subset T(V)$ is called the defining ideal of the quantum quasi-shuffle algebra $Q_{\s,m}(V)$.
\end{definition}

\begin{remark}
As shown in \cite{JRZ11}, \cite{JR12}, by requiring elements in $V$ to be primitive, by the universal property, there exists a coproduct on $T_{\s,m}(V)$ which can be restricted to $Q_{\s,m}(V)$, making them into braided Hopf algebras. It is clear that with this coproduct, $I_{\s,m}(V)$ is a bi-ideal.
\end{remark}

It is difficult to decide a reasonable generating set of $I_{\s,m}(V)$, even when the multiplication is identically zero (quantum shuffle algebras $S_\s(V)$): in this case it is a question asked by Andruskiewitsch \cite{A03}. Several partial results are known when the braiding is diagonal and $m$ is zero:
\begin{enumerate}
\item The algebra $S_\s(V)$ is finite dimensional (\cite{Ang});
\item The algebra $S_\s(V)$ is infinite dimensional but has finite Gelfand-Kirillov dimension (\cite{Ros98});
\item The algebra $S_\s(V)$ is of infinite Gelfand-Kirillov dimension (\cite{F12}).
\end{enumerate}
We will return to the study of the ideal $I_{\s,m}(V)$ in the next section by showing that a quantum quasi-shuffle algebra $Q_{\s,m}(V)$ can be degenerated to a quantum shuffle algebra $S_\s(V)$ and how relations in $S_\s(V)$ can be lifted to $Q_{\s,m}(V)$.

\section{Degeneration of associative structures and defining ideals}

We recall the following diagram involving various braid groups (\cite{F13}):
\[
\xymatrix{\GVB_n \ar[r]^-\alpha \ar[d]^-\gamma & \mathfrak{B}_n\ar[d]^-\beta \\ \VB_n  & \mathfrak{S}_n,}
\]
where $\alpha$ (resp. $\beta$; $\gamma$) is the quotient by the normal sub-group generated by $\xi_i$ (resp. $\s_i^2$; $\s_i^2$) for $1\leq i\leq n-1$. They induce a similar diagram of monoids associated to these groups. When passed to the algebra level, the diagram above can be completed to a commutative square:
\[
\xymatrix{\mc[{\GVB}_n^+] \ar[r]^-{\wt{\alpha}} \ar[d]^-{\wt{\gamma}} & \mc[\mathfrak{B}_n^+] \ar[d]^-{\wt{\beta}}\\ 
\mc[{\VB}_n^+] \ar[r]^-{\wt{\delta}} & \mc[\mathfrak{S}_n]}.
\]
where $\wt{\alpha}$ (resp. $\wt{\beta}$; $\wt{\gamma}$; $\wt{\delta}$) is the quotient by the ideal generated by $\xi_i$ (resp. $\s_i^2-1$; $\s_i^2-1$; $\xi_i$) for $1\leq i\leq n-1$.

\begin{lemma}\label{Lem:red}
For any $s\in\mathfrak{S}_{p,q}$, $\widetilde{\alpha}(Q_{p,q}(s))=T_s$ and $\widetilde{\beta}(T_s)=s$.
\end{lemma}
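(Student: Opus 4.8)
\textbf{Proof plan for Lemma \ref{Lem:red}.}

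The plan is to unwind the definition of $Q_{p,q}(s)$ term by term and observe that applying $\widetilde\alpha$ kills every summand containing a $\xi_i$ while applying $\widetilde\beta\circ\widetilde\alpha$ further collapses the braid generators $\s_i$ to the simple transpositions $s_i$. First I would recall that, by the explicit formula in Section \ref{Sec:2.3}, for $s\in\mathfrak{S}_{p,q}$ with bubble decomposition $(s^{(n-1)},\dots,s^{(1)})$ and $t_k=t_k(s)$,
$$Q_{p,q}(s)=\prod_{1\leq k\leq n}^{s^{(k)}\neq e}\bigl(\s_{t_k}+(1-\delta_{t_k+1,t_{k+1}})\,\s_1\cdots\s_{t_k-1}\xi_{t_k}\bigr)\s_{t_k+1}\cdots\s_k,$$
the product being taken in descending order of $k$. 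Since $\widetilde\alpha$ is the algebra map sending each $\xi_i$ to $0$ and each $\s_i$ to $\s_i\in\mc[\mathfrak{B}_n^+]$, it is multiplicative, so it may be applied factor by factor: each bracketed factor maps to $\s_{t_k}$ (the $\xi_{t_k}$-summand dies), and hence
$$\widetilde\alpha(Q_{p,q}(s))=\prod_{1\leq k\leq n}^{s^{(k)}\neq e}\s_{t_k}\s_{t_k+1}\cdots\s_k.$$

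Next I would identify the right-hand side with $T_s$. By the discussion of the bubble decomposition in Section \ref{Sec:2}, $s=s^{(n-1)}s^{(n-2)}\cdots s^{(1)}$ with $s^{(k)}=s_{t_k}s_{t_k+1}\cdots s_k$ is a \emph{reduced} expression of $s$; consequently $l(s)=\sum_k (k-t_k+1)$, the number of simple reflections appearing. Applying the Matsumoto-Tits section $T$ to this reduced word and using that $T$ is well-defined (independent of the reduced expression) gives $T_s=\prod_{k: s^{(k)}\neq e}\s_{t_k}\s_{t_k+1}\cdots\s_k$, which is exactly the expression above. Alternatively, one can argue purely inside $\mathfrak{B}_n^+$ via Lemma \ref{Lem:length}, splitting $s$ along its bubble factors and checking additivity of lengths at each step. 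Either way, $\widetilde\alpha(Q_{p,q}(s))=T_s$.

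Finally, for $\widetilde\beta(T_s)=s$: the map $\widetilde\beta$ sends $\s_i\mapsto s_i$, so it carries the reduced word $\s_{i_1}\cdots\s_{i_l}$ representing $T_s$ to $s_{i_1}\cdots s_{i_l}=s$; this is immediate from the definition of the Matsumoto-Tits section. I do not expect any serious obstacle here: the only point requiring care is the bookkeeping of the bubble decomposition and verifying that the word it produces is genuinely reduced (so that $T$ can be computed on it directly), but this is precisely the content recalled from \cite{F13} in Section \ref{Sec:2} and may be quoted. The essential mechanism is simply that $\widetilde\alpha$ is an algebra homomorphism annihilating the $\xi_i$, which turns the "quantum" correction terms in $Q_{p,q}$ off and recovers the classical section.
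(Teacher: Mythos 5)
Your proof is correct and follows essentially the same route as the paper: apply $\widetilde\alpha$ factor by factor to kill the $\xi$-summands, leaving $\prod\s_{t_k}\s_{t_k+1}\cdots\s_k$, and then invoke the fact that the bubble decomposition yields a reduced expression so that this product is $T_s$ by the definition of the Matsumoto--Tits section. The paper's proof is just a terser version of the same argument (and, like you, treats $\widetilde\beta(T_s)=s$ as immediate).
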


\begin{proof}
By the definition of $Q_{p,q}(s)$, 
$$\widetilde{\alpha}(Q_{p,q}(s))=\prod_{1\leq k\leq n}^{\s^{(k)}\neq e}\s_{t_k}\s_{t_k+1}\cdots\s_k.$$
Since the bubble decomposition produces a reduced expression, $\widetilde{\alpha}(Q_{p,q}(s))$ is the image of $s$ under the Matsumoto-Tits section by definition.
\end{proof}

\subsection{Degeneration}
Let $(V,\s,m)$ be a braided algebra. By forgetting structures, $(V,\s)$ is a braided vector space. We let $S_\s(V)$ (resp. $Q_{\s,m}(V)$) denote the quantum shuffle algebra (resp. quantum quasi-shuffle algebra) associated to the braided vector space $(V,\s)$ (resp. the braided algebra $(V,\s,m)$).
\par
The tensor space $T(V)$ has a natural grading by $T^n(V)$, making $T_\s(V)$ a braided algebra and $T_{\s,m}(V)$ a filtered algebra. As sub-algebras, $S_\s(V)$ is a graded algebra and $Q_{\s,m}(V)$ is a filtered algebra with this grading structure. Let $T^{gr}_{\s,m}(V)$ and $Q_{\s,m}^{gr}(V)$ denote the graded algebras associated to the filtered algebras, the canonical projections are denoted by $\wt{\pi}:T_{\s,m}(V)\ra T_{\s,m}^{gr}(V)$ and $\pi:Q_{\s,m}(V)\ra Q_{\s,m}^{gr}(V)$.

\begin{proposition}
There exists an isomorphism of algebra $T_{\s,m}^{gr}(V)\cong T_\s(V)$.
\end{proposition}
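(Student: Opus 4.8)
The statement to prove is that the associated graded algebra $T^{gr}_{\s,m}(V)$ of the filtered algebra $T_{\s,m}(V)$ is isomorphic, as an algebra, to the quantum shuffle algebra $T_\s(V)$. The plan is to compare the two products on the common underlying graded vector space $T(V)$ (both sides are $T(V)$ as a vector space, and the grading by $T^n(V)$ is the one defining the filtration on $T_{\s,m}(V)$) and to show that passing to the top-degree term of the quantum quasi-shuffle product recovers exactly the quantum shuffle product. First I would make precise the filtration: for $x\in T^p(V)$ and $y\in T^q(V)$, the quantum quasi-shuffle product $x\ast y$ lies in $T^{\leq p+q}(V)$, and I claim its degree-$(p+q)$ component equals $x\bullet y$ while all lower-degree components are what gets killed in the associated graded. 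This is the heart of the matter.

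\textbf{Key steps.} The cleanest route is to use the combinatorial description already available in the excerpt. By Theorem~\ref{Thm:1}, for $v_1,\dots,v_{p+q}\in V$,
$$(v_1\ts\cdots\ts v_p)\ast(v_{p+1}\ts\cdots\ts v_{p+q})=\bold{D}\circ\sum_{s\in\mathfrak{S}_{p,q}}Q_{p,q}(s)(v_1\ts\cdots\ts v_{p+q}).$$
Each summand $Q_{p,q}(s)$ is, by its explicit formula, a sum of monomials in the $\s_i$ and $\xi_i$; a monomial involving $l_v$ occurrences of the $\xi$'s, after applying $\bold{D}$, produces a tensor in $T^{p+q-l_v}(V)$, since each $\xi$ contributes one factor $\one$ which $\bold{D}$ deletes. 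Hence the degree-$(p+q)$ (top) component of $x\ast y$ comes precisely from the $\xi$-free monomials. The second step is to identify these $\xi$-free monomials: applying $\wt\alpha$ (the quotient killing the $\xi_i$) to $Q_{p,q}(s)$ gives $T_s$ by Lemma~\ref{Lem:red}, so the $\xi$-free part of $Q_{p,q}(s)$ is exactly $T_s$, and the top-degree component of $x\ast y$ is $\sum_{s\in\mathfrak{S}_{p,q}}T_s(v_1\ts\cdots\ts v_{p+q})$, which is by definition $x\bullet y$. Thus the bilinear map $T^{gr}_{\s,m}(V)\ts T^{gr}_{\s,m}(V)\to T^{gr}_{\s,m}(V)$ induced on the associated graded coincides, under the identification of graded vector spaces $\bigoplus_n T^n(V)=T(V)=T_\s(V)$, with the quantum shuffle product. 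The third step is formal: the identity map $T(V)\to T(V)$ is then a bijective algebra homomorphism $T^{gr}_{\s,m}(V)\to T_\s(V)$ (it is compatible with the gradings and intertwines the two products by the above), hence an isomorphism of graded algebras; one should also note it sends $\one$ to $\one$, so it is unital.

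\textbf{Main obstacle.} The genuinely substantive point, and the one I would be most careful with, is the bookkeeping in the second step: verifying that applying $\bold{D}$ to a monomial of $\xi$-degree $l_v$ appearing in $Q_{p,q}(s)$ really lands in $T^{p+q-l_v}(V)$ (and never in a higher degree), so that the filtration on $T_{\s,m}(V)$ is indeed the one by $T^{\leq n}(V)$ and the top degree isolates exactly $\bullet$. This uses the structural feature already noted in the proof of Theorem~\ref{Thm:2}, step (3): each appearance of $\xi_k$ in $Q_{p,q}(s)$ is immediately followed by $\s_1\cdots\s_{k-1}$, which transports the $\one$ created by $\xi_k$ to the first tensor slot, so the $\one$'s accumulate cleanly at the front and $\bold{D}$ removes exactly $l_v$ of them. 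Beyond that, everything reduces to combining Theorem~\ref{Thm:1} and Lemma~\ref{Lem:red}, so the argument is short once this filtration-degree claim is spelled out. A minor alternative, if one prefers not to route through the combinatorics, is to argue directly from the universal property defining $T_{\s,m}(V)$ in \cite{JRZ11},\cite{JR12}: the multiplication $m$ on $V$ lowers tensor degree by one, so it drops out in the associated graded and one is left with the $m\equiv 0$ case, which is $T_\s(V)$ by the remark at the end of Section~2.4; but the combinatorial proof above is self-contained given what the excerpt has already established.
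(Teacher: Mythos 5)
Your proposal is correct and follows essentially the same route as the paper's proof: invoke Theorem \ref{Thm:1}, observe that the lower-degree terms of the quasi-shuffle product come exactly from monomials containing some $\xi_i$ (so they vanish in the associated graded), and identify the $\xi$-free part of $Q_{p,q}(s)$ with $T_s$ via $\wt{\alpha}$ and Lemma \ref{Lem:red}. Your extra care about the degree bookkeeping (each $\xi$ producing a $\one$ that $\bold{D}$ deletes, lowering the degree by exactly $l_v$) is a welcome elaboration of a point the paper states more tersely, but it is the same argument.
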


\begin{proof}
For $v_1,\cdots,v_{p+q}\in V$, by Theorem \ref{Thm:1}, the product of $v_1\ts\cdots\ts v_p$ and $v_{p+1}\ts\cdots\ts v_{p+q}$ in $T_{\s,m}(V)$ is given by:
$$(v_1\ts\cdots\ts v_p)\ast(v_{p+1}\ts\cdots\ts v_{p+q})=\bold{D}\circ \sum_{s\in\mathfrak{S}_{p,q}}Q_{p,q}(s)(v_1\ts\cdots\ts v_{p+q}).$$

In the above formula, passing to $T_{\s,m}^{gr}(V)$ requires us to drop the pure tensors of degree less than $p+q$; but to obtain these pure tensors it is obliged to act $\xi_i$ for $1\leq i\leq p+q-1$ at least once.
\par
We denote $\overline{v_1\ts\cdots\ts v_n}:=\wt{\pi}(v_1\ts\cdots\ts v_n)$ and the product therein by $\ast^{gr}$. The above argument shows that in $T_{\s,m}^{gr}(V)$,
\begin{eqnarray*}
(\overline{v_1\ts\cdots\ts v_p})\ast^{gr}(\overline{v_{p+1}\ts\cdots\ts v_{p+q}}) &=& \bold{D}\circ\wt{\alpha}\left(Q_{p,q}\left(\sum_{s\in\mathfrak{S}_{p,q}} s\right)\right)(v_1\ts\cdots\ts v_{p+q})\\
&=& \sum_{s\in\mathfrak{S}_{p,q}} T_s(v_1\ts\cdots\ts v_{p+q})\\
&=& (\overline{v_1\ts\cdots\ts v_p})\bullet (\overline{v_{p+1}\ts\cdots\ts v_{p+q}}),
\end{eqnarray*}
where $\bullet$ is the quantum shuffle product and Lemma \ref{Lem:red} is applied. 
\end{proof}

Passing to the sub-algebra generated by degree $1$ we have

\begin{corollary}
The isomorphism in the above proposition induces an isomorphism of algebras $Q_{\s,m}^{gr}(V)\cong S_\s(V)$.
\end{corollary}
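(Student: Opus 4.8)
The plan is to show that the algebra isomorphism $T_{\s,m}^{gr}(V)\cong T_\s(V)$ constructed in the preceding proposition restricts to an isomorphism between the sub-algebras generated by the degree-$1$ component $V$. First I would observe that the isomorphism of the proposition is, on the level of vector spaces, simply the identity map of $T(V)$ (it identifies $T^{gr}_{\s,m}(V)$ and $T_\s(V)$ as the same graded vector space $T(V)$, sending the class $\overline{v_1\ts\cdots\ts v_n}$ to $v_1\ts\cdots\ts v_n$); what the proposition proves is that under this identification the graded product $\ast^{gr}$ agrees with the quantum shuffle product $\bullet$. In particular it is an isomorphism of \emph{graded} algebras, since both multiplications are manifestly graded and the map preserves the grading.

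Next I would recall the relevant definitions: $Q_{\s,m}(V)\subset T_{\s,m}(V)$ is by definition the sub-algebra generated by $V=T^1(V)$, and $S_\s(V)\subset T_\s(V)$ is the sub-algebra generated by $V$. Because $Q_{\s,m}(V)$ is a filtered sub-algebra of the filtered algebra $T_{\s,m}(V)$ and $V$ sits in filtration degree $\leq 1$, its associated graded $Q_{\s,m}^{gr}(V)$ is naturally a graded sub-algebra of $T_{\s,m}^{gr}(V)$, and it is generated as an algebra by the image of $V$ in degree $1$. Under the isomorphism $T_{\s,m}^{gr}(V)\cong T_\s(V)$, the degree-$1$ component maps identically onto the degree-$1$ component $V\subset T_\s(V)$. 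Since an algebra isomorphism carries the sub-algebra generated by a subset $X$ onto the sub-algebra generated by the image of $X$, the image of $Q_{\s,m}^{gr}(V)$ is exactly the sub-algebra of $T_\s(V)$ generated by $V$, namely $S_\s(V)$. Hence the restriction of the isomorphism gives $Q_{\s,m}^{gr}(V)\cong S_\s(V)$ as graded algebras.

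There is essentially no hard obstacle here; the statement is a formal consequence of "an algebra isomorphism of filtered/graded algebras matching the generating degree-$1$ pieces restricts to an isomorphism of the sub-algebras they generate." The only point deserving a line of care is the identification $\mathrm{gr}\,Q_{\s,m}(V)=$ sub-algebra of $\mathrm{gr}\,T_{\s,m}(V)$ generated by $\mathrm{gr}^1 V$: this uses that $Q_{\s,m}(V)$ is generated by $V$ together with the fact that taking associated graded is compatible with forming the sub-algebra generated by a filtered subspace concentrated in degree $\leq 1$ (so that no lower-order-term phenomena obstruct surjectivity of $\mathrm{gr}$ of the inclusion onto the sub-algebra generated by $\mathrm{gr}^1 V$). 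Once this is noted, the corollary follows by passing the previous proposition's isomorphism to the sub-algebras generated by degree $1$, exactly as indicated in the sentence preceding the statement.
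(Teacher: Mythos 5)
Your argument is correct and follows exactly the paper's own (one-line) proof: the isomorphism $T_{\s,m}^{gr}(V)\cong T_\s(V)$ of the proposition is the identity on the underlying graded vector space $T(V)$, so it carries the sub-algebra generated by the degree-one component $V$ on one side onto the sub-algebra generated by $V$ on the other, i.e.\ $Q_{\s,m}^{gr}(V)$ onto $S_\s(V)$. The single delicate point you rightly isolate --- that the associated graded of the sub-algebra generated by $V$ coincides with the sub-algebra of $T_{\s,m}^{gr}(V)$ generated by $V$ (no cancellation of leading terms producing extra classes in $Q_{\s,m}^{gr}(V)$) --- is likewise left implicit in the paper, so your write-up is if anything more explicit than the original.
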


We obtain therefore two linear isomorphisms $\wt{\pi}:T_{\s,m}(V)\cong T_\s(V)$ and $\pi:Q_{\s,m}(V)\cong S_\s(V)$.

\subsection{Application: study of defining ideals}
We apply the degeneration procedure to study the lifting of defining relations in $S_\s(V)$ to $Q_{\s,m}(V)$.
\par
For $n\geq 2$, let $T_n=\sum_{s\in\mathfrak{S}_n}T_s$ be the total symmetrization operator in $\mc[\mathfrak{B}_n^+]$, looked in $\mc[\GVB_n^+]$. 
\par
Fix a total symmetrization element $\text{QS}_n\in\mc[\GVB_n^+]$, it can be written as $\QS_n=T_n+W_n$, where $W_n\in\mc[\GVB_n^+]$ satisfies $\wt{\alpha}(W_n)=0$. Denote $I_{\s,m}(V):=\ker \QS\subset T(V)$ be the defining ideal of $Q_{\s,m}(V)$ and $I_{\s,m}(V)_k=I_{\s,m}(V)\cap T^{\leq k}(V)$. Let $I_\s(V)_k:=\ker (T_n:T^n(V)\ra T^n(V))$ and $I_\s(V)=\bigoplus_{k\geq 2} I_\s(V)_k$ be the defining ideal of $S_\s(V)$. 
\par
The proof of the following lemma is clear.

\begin{lemma}\label{Lem:-1}
Let $x\in T^{\leq n}(V)$ satisfy $\QS(x)=0$. We write $x=x_1+x_2$ where $x_1\in T^n(V)$ and $x_2\in T^{\leq n-1}(V)$, then $T_nx_1=0$.
\end{lemma}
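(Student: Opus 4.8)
The plan is to read off the statement from the filtration on $T(V)$ together with the fact that $\QS$ is triangular with respect to it. First I would recall that $\QS=\sum_{k\ge 0}\QS_k$ with $\QS_k=\bold{D}\circ Q_k\in\Hom(V^{\ts k},T^{\leq k}(V))$, so that $\QS$ maps $T^{\leq m}(V)$ into $T^{\leq m}(V)$ for every $m$. Since $x_2\in T^{\leq n-1}(V)$, this already gives $\QS(x_2)\in T^{\leq n-1}(V)$, and by linearity $\QS(x)=\QS_n(x_1)+\QS(x_2)$, where we used $x_1\in T^n(V)$ so that $\QS(x_1)=\QS_n(x_1)$.

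Next I would analyse the degree-$n$ term $\QS_n(x_1)=\bold{D}\circ Q_n(x_1)$ using the splitting $Q_n=T_n+W_n$ with $\wt{\alpha}(W_n)=0$, i.e. $W_n$ lies in the two-sided ideal of $\mc[\GVB_n^+]$ generated by the $\xi_i$. Hence every monomial summand of $W_n$ contains at least one factor $\xi_i$, and by the observation already made in step (3) of the proof of Theorem \ref{Thm:2} (and reused in Theorem \ref{Thm:main2}), the action of such a monomial on $v_1\ts\cdots\ts v_n\in V^{\ts n}$ yields, after applying $\bold{D}$, a tensor in $T^{\leq n-1}(V)$: each $\xi_k$ inserts a $\one$ which is then deleted. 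Therefore $\bold{D}\circ W_n(x_1)\in T^{\leq n-1}(V)$, whereas $\bold{D}\circ T_n(x_1)=T_nx_1\in T^n(V)$, since $T_n\in\mc[\mathfrak{B}_n^+]$ acts purely by permuting tensorands and introduces no $\one$. Consequently $\QS_n(x_1)=T_nx_1+z_1$ with $z_1\in T^{\leq n-1}(V)$.

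Finally I would compare homogeneous components. Putting the two paragraphs together, $0=\QS(x)=T_nx_1+(z_1+\QS(x_2))$ with $z_1+\QS(x_2)\in T^{\leq n-1}(V)$; since $T_nx_1$ is homogeneous of degree $n$, extracting the degree-$n$ part of this identity forces $T_nx_1=0$. The only point requiring care is the assertion that $\bold{D}\circ W_n(x_1)$ genuinely drops the degree, i.e. that every $\xi$-factor really leaves behind a $\one$ that survives the deletion operator; but this is exactly the bookkeeping carried out in the proof of Theorem \ref{Thm:2}, so no further argument is needed, which is why the lemma is essentially immediate.
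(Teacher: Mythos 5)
Your argument is correct: the paper dismisses this lemma with ``the proof is clear,'' and what you have written is precisely the intended justification --- $\QS$ preserves the filtration, the splitting $Q_n=T_n+W_n$ with $\wt{\alpha}(W_n)=0$ forces every monomial of $W_n$ to contain a $\xi_i$, whose action inserts a $\one$ that survives (the relations (\ref{eq2}) show $\one$'s are never destroyed by $\s$ or $\s^m$) and is then removed by $\bold{D}$, so $\bold{D}\circ W_n$ strictly lowers the degree and the degree-$n$ component of $\QS(x)$ is exactly $T_nx_1$. No gaps; this matches the paper's (omitted) reasoning.
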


Assume that a family of homogeneous generators of $I_\s(V)$ is known. We start with the degree $2$ case: take $x\in I_{\s,m}(V)_2$, i.e., $\QS(x)=0$; by Lemma \ref{Lem:-1}, we may write $x=\overline{x}+v$ for $\overline{x}\in I_\s(V)$ and $v\in V$, the condition $x\in I_{\s,m}(V)_2$ implies that 
$$v=-{\QS}_2(\overline{x})=-T_2(\overline{x})-W_2(\overline{x})=-W_2(\overline{x}),$$
hence $x=\overline{x}-W_2(\overline{x}).$ 
Therefore relations in $I_{\s,m}(V)_2$ can be lifted from those in $I(V)_2$ in an explicit way.
\par
Suppose that relations in $I_{\s,m}(V)_k$ for $k=2,\cdots,n-1$ can be lifted from those in $I_\s(V)_2,\cdots,I_\s(V)_k$ in an explicit way. 
\par
We take $x\in I_{\s,m}(V)_n$, by Lemma \ref{Lem:-1}, $\QS(x)=0$ allows us to write $x=\overline{x}+x'$ where $\overline{x}\in I_\s(V)_n$ and $x'\in T^{\leq n-1}(V)$, and then 
$$\QS(\overline{x})=-{\QS}(x').$$ 
Since $T_n(\overline{x})=0$ and $\QS_n=T_n+W_n$, $W_n(\overline{x})=-\QS(x')$ and $x'$ can be determined up to $I_{\s,m}(V)_{n-1}$. To be precise, take any $y\in \QS^{-1}(W_n(-\overline{x}))$, then 
$$x-\overline{x}-y\in I_{\s,m}(V)_{n-1},\ \ (\ \text{i.e., }{\QS}(x-\overline{x}-y)=0\ )$$ 
and the inductive algorithm for rank $n-1$ can be applied. As a summary, by an inductive procedure, elements in $I_{\s,m}(V)_{n}$ can be lifted from those in $I_{\s}(V)_2,\cdots,I_{\s}(V)_n$ using the above algorithm.
\par
We terminate this argument by the following example:

\begin{example}
Let $V=\sspan\{y_i|\ i\in\mathbb{N}_{>0}\}$ be an associative algebra where the multiplication $m:V\ts V\ra V$ is determined by
$$m(y_i\ts y_j)=y_{i+j}$$
for any $i,j\geq 1$. Then $(V,P,m)$ is a braided algebra without unit where for any $u,v\in V$,
$$P:V\ts V\ra V\ts V,\ \ u\ts v\mapsto v\ts u$$ 
is the flip.
\par
The quantum quasi-shuffle algebra associated to this braided algebra is the quasi-shuffle algebra \cite{Hoff}; we let $C_m(V)$ denote it. This algebra is isomorphic to the algebra $\mathfrak{H}^1$ in the theory of multiple zeta values (MZVs) (\textit{loc. cit}).
\par
By the above argument, there exists a linear isomorphism $C_m(V)\cong S(V)$ (see Example \ref{Ex:S(V)}). We let $I_m(V)$ and $I(V)$ denote the defining ideals of them. The ideal $I(V)$ is clearly generated by $y_i\ts y_j-y_j\ts y_i$ for any $i\neq j\geq 1$, hence the ideal $I_m(V)$ is generated by
$$y_i\ts y_j-y_j\ts y_i-{\QS}_2(y_i\ts y_j-y_j\ts y_i)=y_i\ts y_j-y_j\ts y_i.$$
Therefore $C_m(V)$ is a commutative algebra, isomorphic to the infinite polynomial algebra $\mc[y_1,y_2,\cdots]$, as shown in \cite{Hoff}.
\end{example}

The argument in this example can be applied to prove the following result, generalizing Theorem 15 in \cite{JRZ11}.

\begin{proposition}
Let $(V,\s,m)$ be a braided algebra which is moreover braided commutative (i.e., $m\circ\s=m$). Assume that $S_\s(V)$ is commutative, then $Q_{\s,m}(V)$ is commutative.
\end{proposition}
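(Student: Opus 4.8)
The plan is to follow the inductive lifting algorithm developed just above for the defining ideals, but to carry the extra structural information (braided commutativity of $m$ and commutativity of $S_\s(V)$) through each step so that the lifted generators of $I_{\s,m}(V)$ remain commutators, forcing $Q_{\s,m}(V)$ to be commutative. First I would record the two hypotheses in the language of the previous section: commutativity of $S_\s(V)$ means $I_\s(V)$ is generated (as an ideal) by the elements $v\cdot w - w\cdot v$ for $v,w\in V$, where $\cdot$ is the quantum shuffle product; equivalently $v\ts w+\s(v\ts w)\in I_\s(V)_2$ up to the symmetric part, so that $T_2(v\ts w-\s^{-1}(w\ts v))$ or the appropriate combination lies in $\ker T_2$. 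Braided commutativity $m\circ\s=m$ says that the multiplication factors through the coinvariants of $\s$, and in particular $m(v\ts w)=m(\s(v\ts w))$; this is what will let the degree-drop terms $\bold{D}\circ W_n$ also be expressible through commutators.

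Next I would run the inductive algorithm from the paragraph preceding the statement, but tracking commutators. In degree $2$: take a generator $v\ts w-w\ts v$-type element $\overline{x}\in I_\s(V)_2$; its lift to $I_{\s,m}(V)_2$ is $\overline{x}-W_2(\overline{x})$, and I must check $W_2(\overline{x})$ is again a commutator in $Q_{\s,m}(V)$ — but $W_2(\overline{x})\in V$ (degree drop), and using $m\circ\s=m$ one sees $W_2(v\ts w-\s(v\ts w))=m(v\ts w)-m(\s(v\ts w))=0$, exactly as in the Example above with the flip. So the degree-$2$ lifted relations are still $v\ast w-w\ast v$, i.e., $Q_{\s,m}(V)$ is commutative in degree $2$. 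For the inductive step in degree $n$: given $\overline{x}\in I_\s(V)_n$ a product of commutators (from the known generating set of $I_\s(V)$), write $W_n(\overline{x})\in T^{\leq n-1}(V)$ and apply $\QS^{-1}$; by the inductive hypothesis all elements of lower degree lift to combinations of commutators, and $m\circ\s=m$ ensures the lower-degree image $\QS(x')=-W_n(\overline{x})$ again lies in the commutator ideal, so $x'$ can be chosen inside the two-sided ideal generated by $\{u\ast u' - u'\ast u\}$. Hence $x=\overline{x}+x'\in I_{\s,m}(V)_n$ lies in that ideal. Running $n$ over all degrees, $I_{\s,m}(V)$ is contained in — hence equal to, since the reverse containment is automatic — the ideal generated by all $u\ast u'-u'\ast u$, which is precisely the statement that $Q_{\s,m}(V)$ is commutative.

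The main obstacle I anticipate is the bookkeeping in the inductive step showing that $W_n$ maps the commutator ideal of $S_\s(V)$ (graded piece in degree $n$) into the commutator ideal of $Q_{\s,m}(V)$ in total degree $\le n-1$: one must use that $W_n$ is built from the total symmetrization element with at least one $\xi_i$ acting, that $\xi_i$ acts by $\one\ts m(\cdot)$ on the relevant tensorands, and that braided commutativity makes each such $m(\cdot)$-insertion insensitive to the order of the two inputs being multiplied. Concretely this requires re-deriving, in the spirit of Theorem~\ref{Thm:main2} and the deleting-operator argument, that $\bold{D}\circ W_n(\overline{x})$ can be expanded as a sum of genuine iterated $\ast$-products each of which inherits a commutator factor from $\overline{x}$; once this compatibility is in hand, the rest is the formal induction already spelled out before the statement. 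This is also the point where the proof most closely parallels, and generalizes, Theorem~15 of \cite{JRZ11}, which is the case recovered when $S_\s(V)$ is taken to be a symmetric algebra.
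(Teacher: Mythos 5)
Your proposal contains the right germ (the degree-$2$ computation) but it is wrapped in an induction that is both unnecessary and not actually carried out. The paper's proof is a one-step argument: $Q_{\s,m}(V)$ is by definition the subalgebra of $T_{\s,m}(V)$ generated by $V$, and an associative algebra generated by pairwise commuting elements is automatically commutative. So the only thing to check is $v\ast w=w\ast v$ for $v,w\in V$, i.e.\ that the degree-$2$ commutators lie in $\ker\QS_2$; no lifting of higher-degree relations, no control of $W_n$ on a commutator ideal, no induction. Your entire second half (the inductive step for $I_{\s,m}(V)_n$, which you yourself flag as ``the main obstacle I anticipate'' and only describe as something that ``would be required'') is a genuine gap in your write-up and, fortunately, superfluous. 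Your closing logic is also reversed: commutativity of $Q_{\s,m}(V)\cong T(V)/I_{\s,m}(V)$ is the containment \emph{commutator ideal} $\subseteq I_{\s,m}(V)$, not $I_{\s,m}(V)\subseteq$ \emph{commutator ideal}, and the containment you call ``automatic'' is precisely the assertion to be proved.

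There is also a slip in the one computation you do perform. The element of $I_\s(V)_2$ coming from commutativity of $S_\s(V)$ is $v\ts w-w\ts v$ (since $v\bullet w-w\bullet v=(1+\s)(v\ts w-w\ts v)$), so the correction term is $W_2(v\ts w-w\ts v)=m(v\ts w)-m(w\ts v)$; you instead evaluate $W_2(v\ts w-\s(v\ts w))=m(v\ts w)-m(\s(v\ts w))$, which vanishes by $m\circ\s=m$ alone but is not the relevant element unless $\s$ is the flip. To kill the correct term one must combine both hypotheses: commutativity of $S_\s(V)$ gives $\s(v\ts w-w\ts v)=-(v\ts w-w\ts v)$, and applying $m$ together with $m\circ\s=m$ yields $2\bigl(m(v\ts w)-m(w\ts v)\bigr)=0$, hence $m(v\ts w)=m(w\ts v)$ and $\QS_2(v\ts w-w\ts v)=0$. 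With that repair, your degree-$2$ step becomes the paper's proof; everything after it should be deleted.
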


\begin{proof}
Take a basis $\{y_i\}$ of $V$ and consider the degeneration $Q_{\s,m}(V)\ra S_\s(V)$. As $S_\s(V)$ is commutative, the lifting of the commutation relation between $y_i$ and $y_j$ is given by 
$$y_i\ts y_j-y_j\ts y_i-{\QS}_2(y_i\ts y_j-y_j\ts y_i)=y_i\ts y_j-y_j\ts y_i$$
in $Q_{\s,m}(V)$, thanks to the braided commutativity.
\end{proof}

In particular, when $\s^2=\id$, $S_\s(V)$ is commutative, this is the case of Theorem 15 in \cite{JRZ11}.
\par
Starting from a quantum quasi-shuffle algebra, we obtain a quantum shuffle algebra by degeneration; this procedure can be also reversed to study the classification of multiplicative deformations of a braided vector space. We plan to return to this topic in a further publication.

\section*{Acknowledgements}
The author is supported by the Alexander von Humboldt Foundation.

\end{document}